\newtheorem{thm}{Theorem}[section]
\newtheorem{cor}[thm]{Corollary}
\newtheorem{prop}[thm]{Proposition}
\newtheorem{defn}[thm]{Definition}
\newtheorem{rem}[thm]{Remark}
\numberwithin{equation}{section}
\DeclareMathOperator{\Der}{Der}
\DeclareMathOperator{\Hom}{Hom}
\DeclareMathOperator{\Orb}{Orb}
\begin{document}


\title[Infinitesimal deformations of null-filiform Leibniz superalgebras]
{Infinitesimal deformations of null-filiform Leibniz superalgebras}%
\author{Khudoyberdiyev A.Kh. and Omirov B.A.}

\address{[A.\ Kh.\ Khudoyberdiyev and B.\ A.\ Omirov] Institute of Mathematics, National University of Uzbekistan,
Tashkent, 100125, Uzbekistan.} \email{khabror@mail.ru, omirovb@mail.ru}\

%

\begin{abstract}
In this paper we describe the infinitesimal deformations of
null-filiform Leibniz superalgebras over a field of zero
characteristic. It is known that up to isomorphism in each
dimension there exist two such superalgebras $NF^{n,m}$. One of
them is a Leibniz algebra (that is $m=0$) and the second one is a pure
Leibniz superalgebra (that is $m\neq 0$) of maximum nilindex. We
show that the closure of union of orbits of single-generated Leibniz
algebras forms an irreducible component of the variety of Leibniz
algebras. We prove that any single-generated Leibniz algebra is
a linear integrable deformation of the algebra $NF^{n}$. Similar
results for the case of Leibniz superalgebras are obtained.

\end{abstract}

\maketitle \textbf{Mathematics Subject Classification 2010}:
17A32, 17A70, 17B30, 13D10, 16S80.

\textbf{Key Words and Phrases}: Leibniz superalgebra, group of
cohomology, null-filiform superalgebra, linear integrable
deformation, irreducible component.

\section{Introduction.}

Deforming a given mathematical structure is a tool of fundamental
importance in most parts of mathematics, mathematical physics and
physics. Deformations and contractions have been investigated by
researchers who had different approaches and goals. Tools such as
cohomology, gradings, etc. which are utilized in the study of one
concept, are likely to be useful for the other concept as well.

The theory of deformations originated with the problem of
classifying all possible pairwise non-isomorphic complex
structures on a given differentiable real manifold. Formal
deformations of arbitrary rings and associative algebras, and
related cohomology questions, were first investigated by
Gerstenhaber \cite{Gersten}. Later, the notion of deformation was
applied to Lie algebras by Nijenhuis and Richardson \cite{Nijen}.
Because various fields in mathematics and physics exist in which
deformations are used, we focus in the study of Leibniz superalgebras. One-parameter deformations were studied and
established connection between Lie algebra cohomology and
infinitesimal deformations.

Deformation is one of the tools used to study a specific object,
by deforming it into some families of ``similar'' structure objects.
This way we get a richer picture about the original object itself
\cite{Fial1}. But there is also another question approached via
deformation. Roughly speaking, it is the question, can we equip
the set of mathematical structures under consideration (may be up
to certain equivalence) with the structure of a topological or
geometric space.

The theory of deformations is one of the effective approach in
investigating of solvable and nilpotent Lie algebras and
superalgebras \cite{Fial2,Fial3,Khak,Mil},
etc.

Recall, that Leibniz algebras are generalization of Lie algebras
\cite{Lo 2,Lo 3} and it is natural to apply the theory of
deformations to the study of Leibniz algebras. Particularly,
the problems which were studied in \cite{Fial2,Mil} and
others can be considered from point of Leibniz algebras view. Thanks
to the work \cite{Balavoine} we can apply the general principles for
deformations and rigidity of Leibniz algebras.

It is well known that Lie superalgebras are a generalization of
Lie algebras. In the same way, the notion of Leibniz algebra, can
be generalized to Leibniz superalgebras. Lie superalgebras with
maximal nilindex were classified in \cite{Khak1}. In fact, there
exists a unique Lie superalgebra of maximal nilindex. This
superalgebra is a filiform Lie superalgebra. For nilpotent Leibniz
superalgebras the description of the maximal nilindex case
(nilpotent Leibniz superalgebras distinguished by the feature of
being single-generated) was easily done in \cite{Alb}.

 Let $V=V_0\oplus V_1$ be the underlying
vector space of the Leibniz superalgebra $L=L_0\oplus L_1$ of
dimension $n+m$ (where $n$ and $m$ are dimensions of $L_0$ and
$L_1$, respectively)  and let $GL(V)$ be the group of the
invertible linear mappings of the form $f = f_0 + f_1$ such that
$f_0\in GL_n(F)$ and $f_1\in GL_m(F)$ (where $GL(V)=GL_n(F)\oplus
GL_m(F)$). The action of the group $GL(V)$ on the variety of
Leibniz superalgebras induces an action on the Leibniz
superalgebras variety: two laws $\mu_1$ and $\mu_2$ are
isomorphic, if there exists a linear mapping  $f$, $f =f_0 +
f_1\in GL(V)$, such that
$$ \mu_2(x,y)=f_{\alpha +\beta}^{-1}(\mu_1(f_{\alpha}(x), f_{\beta}(y)))\ \ \mbox{for \  all}
\ x\in V_{\alpha}, y\in V_{\beta}.$$
 The orbit under this action,
denoted by $\Orb(\mu)$, consists of all superalgebras isomorphic
to the superalgebra $\mu$. Therefore the description of
$(n+m)$-dimensional superalgebras with dimensions of even and odd parts equal to $n$ and $m$,
respectively (further denoted by $Leib^{n,m}$) can be reduced to a geometric problem of
classification of orbits under the action of the group $GL(V)$.
Note that nilpotent Leibniz superalgebras $N^{n,m}$ form also an
invariant subvariety of the variety $Leib^{n,m}$ under above
action. From algebraic geometry it is known that an algebraic
variety is a union of irreducible components. The superalgebras
with open orbits in the variety of Leibniz superalgebras are {\it
called rigid}. The closures of these open orbits give irreducible
components of the variety. Therefore studying of the rigid
superalgebras is a crucial problem from the geometrical point of
view. The problem of finding such algebras is crucial for the
description of the variety $Leib^{n,m}$.

The structure of the paper is as follows: In Section Preliminaries
we give the necessary definitions and results for understanding
the main parts of this paper. In Section 3 we calculate the second
group of cohomology of the null-filiform Leibniz algebra and show
that the set of single-generated Leibniz algebras forms an
irreducible component of the variety of Leibniz algebras.
Moreover, it established that any single-generated algebra is a
linear integrable deformation of the null-filiform algebra. In the
last section we extend the calculations of previous section for
the case of Leibniz superalgebras.

Throughout the paper we consider finite-dimensional vector spaces
and superalgebras over a field of zero characteristic. Moreover,
in the  multiplication table of a Leibniz superalgebra the omitted
products and in the expansion of 2-cocycles the omitted values
are assumed to be zero.

\section{Preliminaries.}

In this section we give necessary definitions and results for
understanding the main parts of the work.

\begin{defn} \cite{Alb} A $\mathbb{Z}_2$-graded vector space $L=L_0\oplus
L_1$ is called a Leibniz superalgebra if it is equipped with a
product $[-, -]$ which satisfies the following conditions:
$$[x, [y, z]]=[[x, y], z] - (-1)^{|y||z|} [[x, z], y]- \mbox{Leibniz  \ superidentity}$$
 for all $x\in L$, $y \in L_{|y|}$, $z \in
L_{|z|}$.
\end{defn}

Let $L$ be a Leibniz superalgebra. We call a $\mathbb{Z}_2$-graded
vector space $M=M_0\oplus M_1$ a module over $L$ if there are two
bilinear maps:
$$[-,-]:L\times M \rightarrow M \qquad \text{and} \qquad [-,-]:M\times L \rightarrow M$$
satisfying the following three axioms
\begin{align*}
[m,[x,y]] & =[[m,x],y]-(-1)^{|x||y|}[[m,y],x],\\
[x,[m,y]] & =[[x,m],y]-(-1)^{|y||m|}[[x,y],m],\\
[x,[y,m]] & =[[x,y],m]-(-1)^{|m||y|}[[x,m],y],
\end{align*}
for any $m\in M_{|m|}$, $x\in L_{|x|}, y\in L_{|y|}$.

Given a Leibniz superalgebra $L$, let $C^n(L,M)$ be the space of
all super skew-symmetric $F$-linear homogeneous mapping
$L^{\otimes n} \rightarrow M$, $n \geq 0$ and $C^0(L,M) = M$. This
space is graded by $C^n(L,M) = C^n_0(L,M) \oplus C^n_1(L,M)$ with
$$C^n_p(L,M) =
\bigoplus\limits_{\begin{array}{c}n_0+n_1=n\\n_1+r\equiv p \mod
2\end{array}}\Hom(L_0^{\otimes n_0}\otimes L_1^{\otimes n_1},
M_r)$$

Let $d^n : C^n(L,M) \rightarrow C^{n+1}(L,M)$ be an $F$-homomorphism
defined by
 \begin{multline*}
(d^nf)(x_1, \dots , x_{n+1}): =
[x_1,f(x_2,\dots,x_{n+1})] \\
+\sum\limits_{i=2}^{n+1}(-1)^{i+|x_i|(|f|+|x_{i+1}|+\dots+|x_{n+1}|)}[f(x_1,
\dots, \widehat{x_i}, \dots , x_{n+1}),x_i]\\
+\sum\limits_{1\leq i<j\leq
{n+1}}(-1)^{{j+1}+|x_j|(|x_{i+1}|+\dots+|x_{j-1}|)}f(x_1, \dots,
x_{i-1},[x_i,x_j], x_{i+1}, \dots , \widehat{x_j}, \dots
,x_{n+1}),
\end{multline*}
 where $f\in C^n(L,M)$ and $x_i\in L$. Since the derived
operator $d=\sum\limits_{i \geq 0}d^i$ satisfies the property
$d\circ d = 0$, the cohomology group is well defined and
$$HL_p^n(L,M) = ZL_p^n(L,M)/ BL_p^n(L,M),$$
where the elements $ZL_0^n(L,M)$ ($BL_0^n(L,M)$) and $ZL_1^n(L,M)$
($ BL_1^n(L,M)$) are called {\it even $n$-cocycles} ({\it even
$n$-coboundaries}) and {\it odd $n$-cocycles} ({\it odd
$n$-coboundaries}), respectively.

It is remarkable fact that formula for $d^n$ can be obtained from
the derived operator for color Leibniz algebras \cite{Dzhum}.

Note that the space $ZL^1(L,L)$ consists of derivations of the
superalgebra $L$, which are defined by the condition:
$$d([x,y])  = (-1)^{|d||y|} [d(x), y] + [x, d(y)].$$

For a given $x \in L$, $R_x$ denotes the map $R_x: L \rightarrow
L$ such that $R_x(y)=[y,x], \ \forall x \in L$. Note that the map
$R_x$ is a derivation.

{\it A deformation of a Leibniz superalgebra} $L$ is a
one-parameter family $L_t$ of Leibniz superalgebras with the
bracket $$\mu_t = \mu_0 + t\varphi_1 + t^2\varphi_2 + \cdots,$$
where $\varphi_i$ are $L-$valued even 2-cochains, i.e., elements of
$\Hom(L\otimes L, L)_0 = C^2(L, L)_0$.

Two deformations $L_t, \ L'_t$ with corresponding laws $\mu_t, \
\mu'_t$ are {\it equivalent} if there exists a linear automorphism
$f_t = id + f_1 t + f_2 t^2 + \cdots$ of $L$, where $f_i$ are
elements of $C^1(L, L)_0$ such that the following equation holds
$$\mu'_t(x, y) = f_t^{-1}(\mu_t(f_t(x), f_t(y))) \ \ \text{for} \  x, y \in L.$$

The Leibniz superidentity for the superalgebras $L_t$ implies that
the 2-cochain $\varphi_1$ is an even 2-cocycle, i.e. $d^2\varphi_1
= 0$. If $\varphi_1$ vanishes identically, the first non vanishing
$\varphi_i$ will be a 2-cocycle.

If $\mu'_t$ is an equivalent deformation with cochains
$\varphi_i'$, then $\varphi_1' -\varphi_1 = d^1f_1$, hence every
equivalence class of deformations defines uniquely an element of
$HL^2(L, L)_0$.

Note that the linear integrable deformation $\varphi$ satisfies the
condition
\begin{equation}\label{E:2.1}
\varphi(x,  \varphi(y, z)) -  \varphi(\varphi(x, y),
z) + (-1)^{|y||z|}\varphi(\varphi(x, z), y) = 0.
\end{equation}

It should be noted that a Leibniz algebra is a superalgebra with
trivial odd part and the definition of cohomology groups of
Leibniz superalgebras extend the definition of cohomology groups
of Leibniz algebras given in \cite{Lo 3}.

For a Leibniz superalgebra $L$ consider the following central lower
series:
$$
L^1=L,\quad L^{k+1}=[L^k,L^1], \quad k \geq 1.
$$

\begin{defn} \label{defn22} A Leibniz superalgebra $L$ is said to be
nilpotent,
if there exists  $p\in\mathbb N$ such that $L^p=0$.
\end{defn}

Now we give the notion of null-filiform Leibniz superalgebra.
\begin{defn} An $n$-dimensional Leibniz superalgebra is said to be null-filiform if
$\dim L^i=n+1-i, \ 1\leq i \leq n+1$.
\end{defn}

Similarly to the case of nilpotent Leibniz algebras \cite{Ayup} it
is easy to check that a Leibniz superalgebra is null-filiform if and
only if it is single-generated. Moreover, a null-filiform
superalgebra has the maximal nilindex.

\begin{thm} \label{t1} \cite{Alb} Let $L$  be a null-filiform Leibniz superalgebra of the variety $Leib^{n,m}$. Then $L$ is isomorphic to one
of the following non-isomorphic superalgebras:
$$ NF^{n}: \ [x_i,x_1]=x_{i+1},\ 1\le i\le n-1; \quad NF^{n,m}: \
 \begin{cases}[y_i, y_1] = x_{i},& 1 \leq i \leq n,\\
[x_i, y_1] = \frac 1 2 y_{i+1},& 1 \leq i \leq m-1,\\
[y_j, x_1] = y_{j+1},& 1 \leq j \leq m-1,\\
[x_i, x_1] = x_{i+1},& 1 \leq i \leq n-1.
\end{cases}$$
where $\{x_1, x_2, \dots, x_n\}$ and $\{y_1, y_2, \dots, y_m\}$ are
bases of the even and odd parts, respectively.
\end{thm}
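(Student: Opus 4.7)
The plan is to reduce to the single-generated case (using the equivalence, recalled just before the theorem, between being null-filiform and being single-generated), and then split according to the parity of a chosen generator. Since $\dim L^1/L^2 = 1$ and the filtration $L \supset L^2 \supset \cdots$ respects the $\mathbb{Z}_2$-grading, a generator $z$ may be chosen homogeneous: either $z \in L_0$ or $z \in L_1$.

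In the even case $z \in L_0$, set $x_1 := z$ and $x_{i+1} := [x_i, x_1]$ for $i \ge 1$. Because $L_0$ is a subalgebra and $z$ already generates $L$, every $L^k$ sits inside $L_0$, forcing $L_1 = 0$ (so $m=0$). The null-filiform hypothesis then gives $\dim L^k = n+1-k$, so $\{x_1, \ldots, x_n\}$ is a basis with $x_{n+1} = 0$. It remains to check $[x_i, x_j] = 0$ for $j \ge 2$; this is an induction on $j$ using the Leibniz identity
\begin{equation*}
[x_i, x_{j+1}] = [x_i, [x_j, x_1]] = [[x_i, x_j], x_1] - [[x_i, x_1], x_j] = [[x_i,x_j],x_1] - [x_{i+1}, x_j],
\end{equation*}
which, together with the induction hypothesis, collapses the mixed brackets. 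This produces the law $NF^n$.

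In the odd case $z \in L_1$, set $y_1 := z$, $x_1 := [y_1, y_1] \in L_0$, and inductively $y_{i+1} := [y_i, x_1]$ and $x_{i+1} := [x_i, x_1]$. Using the Leibniz superidentity applied to triples of the form $(y_1, y_i, y_1)$, $(x_i, y_1, y_1)$, and $(y_i, y_1, y_1)$, one shows inductively that $[y_i, y_1]$ is proportional to $x_i$ and $[x_i, y_1]$ is proportional to $y_{i+1}$; for example, from $[y_1,[y_i,y_1]] = [[y_1,y_i],y_1] + [x_1, y_i]$ one extracts a recursion on the coefficients. A rescaling of the $y_i$ (and possibly the $x_i$) absorbs all free scalars except the ratio between $[x_i, y_1]$ and $[y_i, x_1]$, which is fixed by the superidentity to be $\tfrac12$. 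The nilpotency/null-filiformness forces a truncation of these sequences at indices $n$ and $m$ respectively. All remaining brackets $[y_i, y_j]$ and $[x_i, y_j]$ with $j \ge 2$ are then shown to vanish by the same type of Leibniz-identity induction as in the even case.

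Finally, non-isomorphism between $NF^n$ and $NF^{n,m}$ with $m \ge 1$ is immediate from $\dim L_1$, and distinct pairs $(n,m)$ yield non-isomorphic superalgebras by dimension count. The main obstacle is the bookkeeping in the odd case: one must simultaneously propagate the inductive definitions of $x_i$ and $y_i$, verify that the auxiliary brackets vanish, and argue that the factor $\tfrac12$ is an invariant of the law rather than an artefact of the basis — this last point is what distinguishes a genuine classification from a mere normalisation.
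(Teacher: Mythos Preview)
The paper does not contain its own proof of this theorem: it is quoted from \cite{Alb} (Albeverio--Ayupov--Omirov), as indicated by the citation attached to the theorem heading. There is therefore no in-paper argument to compare your proposal against; the result is used as input for the deformation computations in Sections~3 and~4.

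Your outline follows the natural strategy and the even case is essentially complete. In the odd case there is one structural point you miss, which the paper records in the remark immediately after the theorem: the null-filiform condition forces $m\in\{n,n+1\}$. This is because the descending central series generated by an odd element $y_1$ acquires basis vectors in the alternating order $y_1,x_1,y_2,x_2,\ldots$, so the even and odd parts can differ in dimension by at most one; your phrase ``a truncation of these sequences at indices $n$ and $m$ respectively'' treats $n$ and $m$ as independent, which they are not. Separately, the talk of ``rescaling to absorb free scalars'' is a slight misdirection: once $y_1$ is fixed and you set $x_1=[y_1,y_1]$, $y_{i+1}=[y_i,x_1]$, $x_{i+1}=[x_i,x_1]$, the basis is already determined and there is nothing left to rescale. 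What must actually be \emph{proved} is that $[y_i,y_1]=x_i$ exactly and $[x_i,y_1]=\tfrac12 y_{i+1}$; both follow by applying the superidentity to $(y_i,y_1,y_1)$ and $(x_i,y_1,y_1)$ respectively, and this is where the factor $\tfrac12$ is forced rather than chosen.
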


\begin{rem} {\em Note that the first superalgebra is a null-filiform Leibniz
algebra \cite{Ayup} and from the assertion of Theorem \ref{t1} we
conclude that in the case of non-trivial odd part of the
null-filiform Leibniz superalgebra $NF^{n,m}$ there are two
possibilities for $m$, namely, $m=n$ or $m=n+1$.}
\end{rem}

\section{Deformations of the null-filiform Leibniz algebra}

In this section we calculate infinitesimal deformations of the
algebra $NF^n$ and we show that any single-generated is a linear
integrable deformation of $NF^n$.

Note that any derivation of the null-filiform Leibniz algebra $NF^{n}$
has the following form \cite{Cas1}:
$$
\left(\begin{array}{ccccc}
a_1& a_2&a_3&\ldots&a_n\\
0& 2a_1&a_2&\ldots& a_{n-1}\\
0& 0&3a_1&\ldots& a_{n-2}\\
\vdots&\vdots&\vdots&\ldots&\vdots\\
0&0&0&\ldots&na_1\\
\end{array}\right).
$$

From this we conclude that $\dim BL^2(NF^{n}, NF^{n}) = n^2 - n$.

In general, a 2-cocycle is a bilinear map from $NF^{n}\otimes
NF^{n}$ to $NF^{n}$ such that $d^2\varphi =0$, i.e.,
$$d^2\varphi(x, y, z) =
[x,\varphi(y,z)] - [\varphi(x,y), z] + [\varphi(x,z), y] +
\varphi(x, [y,z]) - \varphi([x,y],z) + \varphi([x,z],y).$$

\begin{prop}\label{prA1} The following cochains:
$$\varphi_{j,k}(x_j, x_1) = x_k, \  1 \leq j \leq n, \ 2 \leq k \leq n,$$
$$\psi_j  \ (1 \leq j \leq n-1) = \left\{\begin{array}{ll}\psi_j(x_j, x_1) =
x_1,& \\ \psi_j(x_i, x_{j+1}) =-x_{i+1},& 1 \leq i \leq
n-1,\end{array}\right.
$$ form a basis of $ZL^2(NF^{n},NF^{n})$.
\end{prop}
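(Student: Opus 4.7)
The plan is to parametrize a general 2-cochain by $\varphi(x_i, x_j) = \sum_k A^{ij}_k x_k$ and to show that the cocycle equation $d^2 \varphi = 0$ cuts the $n^3$ parameters down to the $n^2 - 1$ combinations realized by the proposed basis. First I would verify directly that each $\varphi_{j,k}$ and each $\psi_j$ is a 2-cocycle; since the only nonzero products in $NF^{n}$ are $[x_i, x_1] = x_{i+1}$, the formula for $d^2$ collapses to a short case check. Linear independence is immediate from the supports: the $\varphi_{j,k}$ live on distinct pairs $(x_j, x_1)$ with distinct outputs $x_k$ for $k \ge 2$, and $\psi_j$ is the only proposed cochain whose $(x_j, x_1)$-value lies in the $x_1$-direction.

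The core of the proof is spanning. I would systematically evaluate $d^2 \varphi = 0$ on a few families of triples. The condition on $(x_1, x_i, x_j)$ with $i, j \ge 2$ collapses (since $[x_1, x_i] = [x_i, x_j] = 0$) to $A^{ij}_1 x_2 = 0$, so $A^{ij}_1 = 0$ for $i, j \ge 2$. The condition on $(x_i, x_1, x_j)$ with $j \ge 2$ gives the recursion
\begin{equation*}
\varphi(x_{i+1}, x_j) = [x_i, \varphi(x_1, x_j)] - [\varphi(x_i, x_1), x_j] + [\varphi(x_i, x_j), x_1],
\end{equation*}
which inductively expresses $\varphi(x_i, x_j)$ for $j \ge 2$ in terms of the boundary data $\varphi(x_\ell, x_1)$ and $\varphi(x_1, x_j)$. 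The condition on $(x_1, x_j, x_1)$ for $2 \le j \le n-1$, after substituting the recursion, reduces to $\varphi(x_1, x_{j+1}) = -(A^{j1}_1 + A^{1j}_1)\, x_2$; induction starting from the base case $(x_1, x_1, x_1)$, which yields $\varphi(x_1, x_2) = -A^{11}_1 x_2$, then forces $A^{1j}_1 = 0$ and $A^{1j}_k = 0$ for $k \ne 2$ whenever $j \ge 2$. Finally, the boundary triple $(x_1, x_n, x_1)$ contributes $A^{n1}_1 + A^{1n}_1 = 0$, so $A^{n1}_1 = 0$ as well.

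The surviving free parameters are exactly $A^{j1}_k$ for $1 \le j \le n,\ 2 \le k \le n$ together with $A^{j1}_1$ for $1 \le j \le n-1$, giving $n^2 - 1$ in total. For an arbitrary cocycle $\varphi$ I would subtract the combination $\sum_{j,k} A^{j1}_k \varphi_{j,k} + \sum_{j=1}^{n-1} A^{j1}_1 \psi_j$ to produce a cocycle $\widetilde\varphi$ with $\widetilde\varphi(x_i, x_1) = 0$ for every $i$; the recursion, combined with the derived constraints that also force $\widetilde\varphi(x_1, x_j) = 0$, then propagates the zero to every input by induction on $i$, giving $\widetilde\varphi \equiv 0$. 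The main obstacle will be the combinatorial bookkeeping among the many possible cocycle conditions, but the recursion organizes them efficiently, and the fact that the $\varphi_{j,k}$ and $\psi_j$ have already been shown to be cocycles guarantees that no hidden relation on a triple not yet considered secretly forces an additional parameter to vanish.
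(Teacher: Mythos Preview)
Your proposal is correct and follows essentially the same strategy as the paper: parametrize $\varphi$, evaluate $d^2\varphi=0$ on the triples $(x_i,x_1,x_j)$ and $(x_i,x_j,x_1)$ to get a recursion determining $\varphi(x_i,x_{j+1})$, and read off the $n^2-1$ surviving parameters $A^{j1}_k$. The only organizational difference is that the paper \emph{sums} the two cocycle conditions to obtain the clean relation $\varphi(x_i,x_{j+1})=-[x_i,\varphi(x_1,x_j)+\varphi(x_j,x_1)]$ directly, whereas you handle $(x_i,x_1,x_j)$ on its own and separately compute $\varphi(x_1,x_j)$ from the triples $(x_1,x_j,x_1)$; both routes land on $\varphi(x_i,x_{j+1})=-A^{j1}_1\,x_{i+1}$ and $A^{n1}_1=0$. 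Your closing observation---that having already checked the $\varphi_{j,k}$ and $\psi_j$ are cocycles guarantees no further triple can impose an extra relation---is a cleaner way to finish than the paper's implicit appeal.
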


\begin{proof} Using the Leibniz 2-cocycle property
 $(d^2\varphi )(x_i, x_1, x_1) = 0$, we have
\begin{equation}\label{E:3.1}
\varphi(x_i, x_2) = - [x_i,  \varphi(x_1, x_1)], \quad 1 \leq i \leq n-1, \quad \varphi(x_n, x_2) = 0.
\end{equation}

The conditions $(d^2 \varphi)(x_i, x_1, x_j) = 0$, $(d^2
\varphi)(x_i, x_j, x_1) = 0$ for $1 \leq i \leq n$, $ 2 \leq j
\leq n$ imply $$[x_i,\varphi(x_1,x_j)] + [\varphi(x_i,x_j), x_1]
-  \varphi([x_i,x_1],x_j) =0,$$
$$[x_i,\varphi(x_j,x_1)] - [ \varphi(x_i,x_j), x_1]  +
\varphi(x_i, [x_j,x_1])  +  \varphi([x_i,x_1],x_j)=0.$$

Summarizing the above equalities, we derive
\begin{equation}\label{E:3.2}
\begin{cases}\varphi(x_i, x_{j+1}) = - [x_i,  \varphi(x_1, x_j)+\varphi(x_j, x_1)], & 1 \leq i \leq n-1, \ 2 \leq j \leq n-1,\\
\varphi(x_n, x_{j+1})=0, & 2 \leq j \leq n-1, \\
[x_i,  \varphi(x_1, x_n)+\varphi(x_n, x_1)] = 0, & 1 \leq i \leq n.
\end{cases}
\end{equation}

Set  $\varphi(x_j, x_1) = \sum\limits_{k=1}^na_{j,k}x_k$ for $1
\leq i \leq n$.

Using inductively method from equalities \eqref{E:3.1} and \eqref{E:3.2} we get
$a_{n,1} =0$ and
$$\varphi(x_i, x_{j+1}) = -a_{j,1}x_{i+1},\quad 1 \leq i \leq n-1,\ 1 \leq j \leq n-1.$$

Therefore, we obtain that any infinitesimal deformation of
$NF^{n}$ has the following form:
$$\begin{cases}\varphi(x_j, x_1) = a_{j,1}x_1 + a_{j,2}x_2+ \dots + a_{j,n}x_n,& 1 \leq j \leq n-1\\
\varphi(x_n, x_1) = a_{n,2}x_2+ \dots + a_{n,n}x_n,& \\
\varphi(x_i, x_{j+1}) = -a_{j,1}x_{i+1}, & 1 \leq i \leq n-1,\ 1 \leq j \leq n-1.
\end{cases}$$

Therefore, $\varphi_{j,k}$ and $\psi_j$ form a basis of
$ZL^2(NF^{n},NF^{n})$.
\end{proof}

\begin{cor}
$\dim(ZL^2(NF^{n},NF^{n})) = n^2-1$.
\end{cor}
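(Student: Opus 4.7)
The plan is simply to read off the dimension from the basis exhibited in Proposition~\ref{prA1}. The family $\varphi_{j,k}$ is indexed by $1\le j\le n$ and $2\le k\le n$, contributing $n(n-1)=n^2-n$ cochains, while the family $\psi_j$ is indexed by $1\le j\le n-1$, contributing a further $n-1$ cochains. Adding the two gives $n^2-n+(n-1)=n^2-1$, which is the claimed dimension.

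The only residual task is to confirm linear independence of the combined list $\{\varphi_{j,k}\}\cup\{\psi_j\}$, since Proposition~\ref{prA1} already establishes spanning. This is immediate from inspection of the defining values: each $\psi_j$ is the unique basis cochain whose image $\psi_j(x_j,x_1)$ has a nonzero $x_1$-component, so any vanishing linear combination must first annihilate the $\psi_j$-coefficients. Once these are removed, the remaining $\varphi_{j,k}$ are separated by the pairs $(x_j,x_1)\mapsto x_k$ with $k\ge 2$, and independence of the distinct target basis vectors $x_k$ forces the $\varphi_{j,k}$-coefficients to vanish as well. No genuine obstacle arises; the corollary is a bookkeeping consequence of the preceding proposition.
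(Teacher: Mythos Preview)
Your proposal is correct and matches the paper's intent: the corollary is stated without proof, as an immediate count of the basis elements from Proposition~\ref{prA1}. Your additional verification of linear independence is not strictly required, since the proposition already asserts that the listed cochains form a basis, but it is a harmless and correct sanity check.
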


Below, we describe a basis of the subspace $BL^2(NF^{n}, NF^{n})$ in
terms of $\varphi_{j,k}$ and $\psi_j$.

\begin{prop}\label{pr4} The cocycles
$$\xi_{j,k}: \quad \begin{cases}\xi_{j,1} = \psi_{j-1} - \varphi_{j,2}, & 2 \leq j \leq n, \\
\xi_{j,k} = \varphi_{j-1,k}, & 2 \leq j \leq k \leq n, \\
\xi_{j,k} = \varphi_{j-1,k}- \varphi_{j,k+1}, & 2 \leq k < j \leq
n\end{cases}$$ form a basis of $BL^2(NF^{n}, NF^{n})$.
\end{prop}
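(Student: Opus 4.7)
My plan is to realize each $\xi_{j, k}$ as $d^1 f$ for an explicit $f \in C^1(NF^n, NF^n)$ and then conclude by a dimension/triangularity argument. Let $e_{k, j}$ denote the linear endomorphism sending $x_j$ to $x_k$ and all other basis vectors to zero; the family $\{e_{k, j} : 1 \leq k, j \leq n\}$ is a basis of $C^1(NF^n, NF^n)$. Using the $1$-coboundary formula $d^1 f(x, y) = [f(x), y] + [x, f(y)] - f([x, y])$ and the multiplication of $NF^n$, evaluation on the pairs $(x_p, x_q)$ and decomposition in the basis of $ZL^2$ from Proposition \ref{prA1} yields three regimes:
\begin{align*}
d^1 e_{1, j} &= \varphi_{j, 2} - \psi_{j-1} \quad \text{for } 2 \leq j \leq n, \\
d^1 e_{k, j} &= \varphi_{j, k+1} - \varphi_{j-1, k} \quad \text{for } 2 \leq k \leq n-1,\ 2 \leq j \leq n, \\
d^1 e_{n, j} &= -\varphi_{j-1, n} \quad \text{for } 2 \leq j \leq n.
\end{align*}
The first gives $\xi_{j, 1} = -d^1 e_{1, j}$, and the second restricted to $k < j$ gives $\xi_{j, k} = -d^1 e_{k, j}$, placing the $k = 1$ family and the $k < j$ family in $BL^2$ at once.

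For $\xi_{j, k} = \varphi_{j-1, k}$ with $2 \leq j \leq k \leq n$, no single $d^1 e_{k', j'}$ equals this $\varphi$-basis vector, so I would telescope: rewriting the second formula as $\varphi_{a, b} = \varphi_{a+1, b+1} - d^1 e_{b, a+1}$ and iterating from $(a, b) = (j-1, k)$ until the second index reaches $n$, then applying the third formula, yields
$$\xi_{j, k} \;=\; -\, d^1\!\left(e_{n,\, j + n - k} + \sum_{i = 0}^{n - 1 - k} e_{k + i,\, j + i}\right),$$
with the convention that the sum is empty when $k = n$. This exhibits $\xi_{j, k}$ as an explicit coboundary in the remaining regime.

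To finish, I count and verify independence. The total number of listed $\xi_{j, k}$ is $(n - 1) + \binom{n}{2} + \binom{n - 1}{2} = n(n - 1)$; on the other hand $\dim C^1 = n^2$ and the derivation matrix recalled just before Proposition \ref{prA1} gives $\dim ZL^1(NF^n, NF^n) = n$, hence $\dim BL^2 = n^2 - n$, matching. For independence, only the $\xi_{j, 1}$ carry a nonzero $\psi$-component (with coefficient $1$ on distinct $\psi_{j-1}$), so those $n-1$ cocycles are independent and complementary to the rest; among the remaining $\xi_{j, k}$, which lie in $\mathrm{span}\{\varphi_{a, b}\}$, ordering pairs $(a, b)$ by $a + b$ singles out a unique leading term in each ($\varphi_{j-1, k}$ with $a < b$ when $j \leq k$, and $-\varphi_{j, k+1}$ with $a \geq b \geq 3$ when $k < j$), and these leading pairs are pairwise distinct, so triangularity forces independence. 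The main technical obstacle is the bookkeeping across the three boundary regimes: the $k = 1$ case forces the $\psi$-cocycle because the output $f(x_1)$ feeds back into $[x_p, x_1] = x_{p+1}$; the $k = n$ case loses the $+\varphi_{j, k+1}$ summand because $x_{n+1} = 0$, which is precisely what truncates the telescoping; and the split $j \leq k$ versus $k < j$ determines whether a single coboundary or a telescoping sum is needed. Verifying that these three specializations glue consistently with the correct signs is the most error-prone step.
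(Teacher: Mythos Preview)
Your proof is correct and follows essentially the same approach as the paper: your endomorphisms $e_{k,j}$ are exactly the paper's $f_{j,k}$ (with swapped index order), and the telescoping sum you use to realize $\varphi_{j-1,k}$ for $j\le k$ is the same identity $d^1 f_{k,s}+d^1 f_{k+1,s+1}+\dots+d^1 f_{n+k-s,n}=\varphi_{k-1,s}$ that the paper records. The only cosmetic differences are your opposite sign convention for $d^1$ (which does not affect the image) and the explicit triangularity argument for linear independence, which the paper leaves implicit in the dimension count.
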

\begin{proof} Consider the endomorphisms $f_{j,k}$ defined as follows:
$$f_{j,k}(x_j) = x_k, \ 2 \leq j \leq n, \ 1 \leq k \leq n.$$

It is easy to see that $f_{j,k}$ are complement of derivations to
$C^1(NF^{n}, NF^{n})$. Therefore, the elements of the space
$BL^2(NF^{n}, NF^{n})$ are $d^1f_{j,k}$ such that $d^1f_{j,k} =
f_{j,k}([x,y]) - [f_{j,k}(x),y] - [x, f_{j,k}(y)]$.

Then we obtain
$$\begin{array}{l} d^1f_{j,1} \ (2 \leq j \leq n)=\begin{cases}d^1f_{j,1}(x_{j-1}, x_1) = x_1, &\\ d^1f_{j,1} (x_j, x_1) =
-x_2,&\\ d^1f_{j,1} (x_i, x_j) = -x_{i+1}, & 2\leq  i \leq n-1,
\end{cases}\\[1mm] d^1f_{j,k} \ \left(\begin{array}{l} 2 \leq j \leq n, \\
2 \leq k \leq n-1\end{array}\right)=\begin{cases}d^1f_{j,k} (x_{j-1}, x_1) = x_k, \\
d^1f_{j,k} (x_j, x_1) = -x_{k+1},\end{cases}\\[1mm]
d^1f_{k,n} \ (2
\leq k \leq n) = \{d^1f_{k,n} (x_{k-1}, x_1) = x_n. \end{array}$$

It should be noted that
$$\begin{cases}d^1f_{j,1} =  \psi_{j-1} - \varphi_{j,2} & 2 \leq j \leq n,\\
d^1f_{j,k} =  \varphi_{j-1,k} -  \varphi_{j,k+1}, & 2 \leq j \leq n, \ 2 \leq k \leq n-1,\\
d^1f_{j,n} =  \varphi_{j-1,n},  &      2 \leq j \leq n.
\end{cases}$$

From the condition $d^1f_{k,s} + d^1f_{k+1,s+1} + \dots +
d^1f_{n+k-s,n} = \varphi_{k-1,s}$ for $2 \leq k \leq s \leq n$, we
conclude that the maps $\xi_{k,s}, \ 2 \leq k \leq n, \ 1 \leq s
\leq n$, form a basis of $BL^2(NF^{n}, NF^{n})$.
\end{proof}

\begin{cor} \label{cor34} The adjoint classes $\overline{\varphi_{n,k}}$ ($2 \leq k \leq n$) form a basis
of $HL^2(NF^{n}, NF^{n})$. Consequently, $\dim HL^2(NF^{n}, NF^{n})
= n - 1$.
\end{cor}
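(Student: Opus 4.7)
The plan is to read $HL^2(NF^n, NF^n) = ZL^2/BL^2$ directly off the two preceding propositions, both numerically and by producing explicit reductions. On the numerical side, Proposition \ref{prA1} gives $\dim ZL^2(NF^n, NF^n) = n^2 - 1$ (namely the $n(n-1)$ cocycles $\varphi_{j,k}$ together with the $n-1$ cocycles $\psi_j$), while Proposition \ref{pr4} gives $\dim BL^2(NF^n, NF^n) = n(n-1) = n^2 - n$ (the $\xi_{j,k}$, which are in bijection with the $f_{j,k}$, $2 \le j \le n$, $1 \le k \le n$). Subtracting yields $\dim HL^2 = n-1$, which is exactly the number of proposed basis classes.

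Next I would show that the classes $\overline{\varphi_{n,k}}$ for $2 \le k \le n$ span $HL^2$, by using each of the three types of $\xi$-relations to kill or identify the remaining generators modulo $BL^2$. First, the relations $\xi_{j,k} = \varphi_{j-1,k} \in BL^2$ for $2 \le j \le k \le n$ give $\overline{\varphi_{i,k}} = 0$ whenever $1 \le i < k$, so only the ``lower triangular'' classes $\overline{\varphi_{i,k}}$ with $i \ge k$ survive. Second, the relations $\xi_{j,k} = \varphi_{j-1,k} - \varphi_{j,k+1} \in BL^2$ for $2 \le k < j \le n$ rewrite as $\overline{\varphi_{i,k}} = \overline{\varphi_{i+1, k+1}}$ for $k \le i \le n-1$; iterating along the diagonal $i-k$ pushes every surviving class up to $\overline{\varphi_{n, k+(n-i)}}$, i.e., to some $\overline{\varphi_{n,\ell}}$ with $2 \le \ell \le n$. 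Third, the relations $\xi_{j,1} = \psi_{j-1} - \varphi_{j,2} \in BL^2$ give $\overline{\psi_{j-1}} = \overline{\varphi_{j,2}}$, and the previous step reduces the right-hand side to a $\overline{\varphi_{n,\cdot}}$ class. So every generator of $ZL^2$, modulo $BL^2$, lies in the span of $\{\overline{\varphi_{n,k}} : 2 \le k \le n\}$.

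Finally, linear independence of these $n-1$ classes is automatic from the dimension count $\dim HL^2 = n-1$ already established: a spanning set of the correct cardinality in a finite-dimensional space is a basis. This gives both assertions of the corollary simultaneously.

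The only mildly delicate step is the bookkeeping in the second paragraph, namely checking that the chain of identifications $\overline{\varphi_{i,k}} = \overline{\varphi_{i+1,k+1}}$ actually has its domain of validity ($k \le i \le n-1$) lined up so that every lower-triangular class is reached from some $\overline{\varphi_{n,\ell}}$, and that the boundary cases ($j=n$ in $\xi_{j,1}$, or $k=n$ in $\xi_{j,k}$) are consistent with Proposition \ref{pr4}. I expect no real obstacle here because the $\xi$-basis was explicitly designed so that the ``upper-triangular'' $\varphi_{j,k}$ are individually exact and the ``lower-triangular'' ones are merely identified along diagonals, leaving exactly the bottom row $\{\varphi_{n,k}\}_{k=2}^n$ as surviving representatives.
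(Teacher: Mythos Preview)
Your argument is correct and matches the paper's intended reasoning: the corollary is stated without proof because the dimension count $\dim HL^2=(n^2-1)-(n^2-n)=n-1$ is already explicit in the text, and the reduction of every generator to some $\overline{\varphi_{n,\ell}}$ via the $\xi$-relations is exactly how the basis of $BL^2$ in Proposition~\ref{pr4} is designed to be used. Your explicit diagonal-pushing and the handling of the $\psi_j$ via $\xi_{j,1}$ simply spell out what the paper leaves to the reader.
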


In the following proposition we describe infinitesimal
deformations of $NF^n$ satisfying the equality \eqref{E:2.1}.
\begin{prop}\label{pr35} A 2-cocycle of $NF^{n}$ satisfy the equality \eqref{E:2.1} if and only if it has the
form:
$$\sum\limits_{j,k}a_{j,k}\varphi_{j,k}.$$
\end{prop}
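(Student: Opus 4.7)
The plan is to write an arbitrary 2-cocycle in the basis of Proposition \ref{prA1} as
$$\varphi = \alpha + \beta, \qquad \alpha = \sum_{j,k} a_{j,k}\varphi_{j,k}, \qquad \beta = \sum_{j=1}^{n-1} b_j\psi_j,$$
and prove that $\varphi$ satisfies \eqref{E:2.1} if and only if $\beta = 0$. For the \emph{if} direction, the structural key is that $\alpha(x_i, x_s) = 0$ whenever $s \neq 1$, while $\alpha(x_i, x_1) \in \mathrm{span}\{x_2, \dots, x_n\}$. Consequently $\alpha(x, \alpha(y, z))$ vanishes identically: either $\alpha(y, z) = 0$, or $\alpha(y, z) \in \mathrm{span}\{x_2, \dots, x_n\}$, which is annihilated by $\alpha(x, -)$. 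A short case split on whether $y$ and $z$ equal $x_1$ then shows that the remaining two terms $-\alpha(\alpha(x,y), z) + \alpha(\alpha(x,z), y)$ either vanish individually or coincide and cancel (the latter when $y = z = x_1$).

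For the \emph{only if} direction I would prove $b_\ell = 0$ by induction on $\ell$. The crucial observation is that for $2 \le s \le n$ and $1 \le i \le n-1$ one has $\varphi(x_i, x_s) = -b_{s-1}x_{i+1}$, and $\varphi(x_n, x_s) = 0$, since the $\varphi_{j,k}$-components only contribute when the second slot is $x_1$. The base case $\ell = 1$: evaluate \eqref{E:2.1} at $(x_1, x_1, x_1)$; since $y = z$, the last two terms cancel and one is left with $\varphi(x_1, \varphi(x_1, x_1)) = 0$, whose $x_1$-coefficient works out to $b_1^2$, forcing $b_1 = 0$. For the inductive step, assume $b_{\ell-1} = 0$; then $\varphi(-, x_\ell) \equiv 0$ by the observation above, which kills the second and third terms of \eqref{E:2.1} evaluated at $(x_\ell, x_\ell, x_1)$, while the first term contributes $b_\ell^2$ as its $x_1$-coefficient, giving $b_\ell = 0$.

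The main obstacle is not conceptual but bookkeeping: one must keep careful track of the boundary conventions $b_0 = b_n = 0$, the range $k \ge 2$ in $\varphi_{j,k}$, and the vanishing of $\varphi(x_n, -)$ on second arguments different from $x_1$, so that the inductive step produces the clean relation $b_\ell^2 = 0$ rather than a tangled quadratic in several $b_j$.
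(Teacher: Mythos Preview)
Your proposal is correct. The ``if'' direction is handled with more detail than the paper (which simply says ``it is easy to check''), and your structural observation --- that $\alpha$ vanishes whenever the second argument is not $x_1$, and always lands in $\mathrm{span}\{x_2,\dots,x_n\}$ --- is exactly the right reason.

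For the ``only if'' direction your argument differs from the paper's. You induct on $\ell$, using the triple $(x_\ell,x_\ell,x_1)$ together with the hypothesis $b_{\ell-1}=0$ to kill the second and third terms and read off $b_\ell^2=0$ from the $x_1$-coefficient of the first. The paper instead evaluates \eqref{E:2.1} at $(x_i,x_j,x_{j+1})$ for each $j\ge 2$ and observes that the cross terms $b_{j-1}b_j\,x_{i+2}$ cancel automatically, leaving $b_j^2\,x_{i+1}=0$ directly --- no induction is required. The paper's route is marginally slicker because it does not need the previous $b_{j-1}$ to vanish; your route is equally valid and has the small advantage that the relevant $x_1$-coefficient is manifestly isolated, so one never has to track the cross terms at all. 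Either way, the base case at $(x_1,x_1,x_1)$ is identical, and the bookkeeping caveats you flag (ranges $k\ge 2$, $b_0=b_n=0$, vanishing of $\varphi(x_n,x_s)$ for $s\ge 2$) are the right ones to watch.
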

\begin{proof} It is easy to check that 2-cocycles of the form $\sum\limits_{j,k}a_{j,k}\varphi_{j,k}$
satisfy the equality \eqref{E:2.1}.

If $\varphi\in ZL^2(NF^{n}, NF^{n})$, then $\varphi =
\sum\limits_{j,k}a_{j,k}\varphi_{k,s}+
\sum\limits_{j=1}^{n-1}b_{j}\psi_{k}$.

From the condition $$\varphi(x_1,  \varphi(x_1, x_1)) -
\varphi(\varphi(x_1, x_1), x_1) +  \varphi(\varphi(x_1, x_1), x_1)
= 0,$$ we get $b_1=0$.

The following chain of equalities
\begin{align*}
{}& \varphi(x_i,  \varphi(x_j, x_{j+1})) -  \varphi(\varphi(x_i, x_j), x_{j+1}) +  \varphi(\varphi(x_i, x_{j+1}), x_j) \\ {}& =
\varphi(x_i,  \psi_{j}(x_j, x_{j+1})) -  \varphi(\psi_{j-1}(x_i, x_j), x_{j+1}) +  \varphi(\psi_{j}(x_i, x_{j+1}), x_j) \\{}&  =
-\psi_{j}(x_i, b_jx_{j+1}) +  \psi_{j}(b_{j-1}x_{i+1}, x_{j+1}) -
\psi_{j-1}(b_jx_{j+1}, x_j)\\{}&  = b^2_jx_{i+1} -
b_jb_{j-1}x_{i+2}+b_jb_{j-1}x_{i+2} = b^2_jx_{i+1}
\end{align*} imply $b_j=0,
\ 2 \leq j \leq n-1$.
\end{proof}

Consider the linear integrable deformations $\mu_t= NF^{n} +
t\sum\limits_{j,k}a_{j,k}\varphi_{j,k}$ of $NF^{n}$.

Since every non-trivial equivalence class of deformations defines
uniquely an element of $HL^2(L, L)$, due to Corollary \ref{cor34}
it is sufficient to consider $\mu_t(a_2, a_3, \dots, a_n) = NF^{n}
+ t\sum\limits_{k=2}^na_{k}\varphi_{n,k}$, where $(a_2, a_3,
\dots, a_n) \neq (0, 0, \dots, 0)$.

Thus, the  multiplication table of $\mu_t(a_2, a_3, \dots, a_n)$
has the form
$$\begin{cases}[x_i, x_1] = x_{i+1}, & 1\leq i
\leq n-1,\\
[x_n, x_1] = t\sum\limits_{k=2}^na_{k}x_k.\end{cases}$$

Putting $a_k' = ta_k$, we can assume $t=1$.

\begin{prop} An arbitrary single-generated Leibniz algebra
admits a basis $\{x_1, x_2, \dots, x_n\}$ such that the
multiplication table  has the form of $\mu_1(a_2, a_3, \dots, a_n)$.
\end{prop}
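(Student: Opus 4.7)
The plan is to start from an arbitrary single-generated Leibniz algebra $L$, fix a generator $u$, manufacture a basis out of iterated right-multiplications by $u$, and then extract the constraint $a_1=0$ from the Leibniz identity.

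First I would set $x_1:=u$, $x_2:=[x_1,x_1]$ and $x_{i+1}:=[x_i,x_1]$ for $i\geq 2$. Using the Leibniz identity in the form $[x,[y,z]]=[[x,y],z]-[[x,z],y]$, one shows by induction on bracket depth that every product in $x_1$ reduces to a linear combination of the right-normed $x_i$'s; hence $W:=\mathrm{span}\{x_i:i\geq 1\}$ is a subalgebra containing $x_1$, and by single-generation $W=L$. If $k$ is minimal such that $x_1,\dots,x_k$ is linearly dependent, then $\mathrm{span}(x_1,\dots,x_{k-1})$ is $R_{x_1}$-invariant, which forces all later $x_j$ to lie in it; so $W$ has dimension $k-1$, and matching dimensions gives $k=n+1$, making $x_1,\dots,x_n$ a basis.

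Next I would prove the auxiliary vanishing $[x_i,x_j]=0$ for all $i$ and all $j\geq 2$ by induction on $j$. For $j=2$, applying the Leibniz identity with $y=z=x_1$ gives $[x_i,[x_1,x_1]]=[[x_i,x_1],x_1]-[[x_i,x_1],x_1]=0$. For $j\geq 3$, expand $[x_i,[x_{j-1},x_1]]=[[x_i,x_{j-1}],x_1]-[[x_i,x_1],x_{j-1}]$ and invoke the inductive hypothesis; the only delicate instance is $i=n$, where $[x_n,x_1]$ may carry a nonzero $x_1$-component, but $[x_1,x_{j-1}]=0$ by induction, so the second term still vanishes.

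Finally I would write $[x_n,x_1]=\sum_{k=1}^{n}a_k x_k$ and apply the Leibniz identity once more with $x=x_1$, $y=x_n$, $z=x_1$. The right-hand side $[[x_1,x_n],x_1]-[[x_1,x_1],x_n]$ equals $0-[x_2,x_n]=0$ by the previous step, while the left-hand side is $[x_1,\sum a_k x_k]=a_1[x_1,x_1]+\sum_{k\geq 2}a_k[x_1,x_k]=a_1 x_2$. Hence $a_1=0$, and the resulting multiplication table is precisely that of $\mu_1(a_2,\dots,a_n)$. The step I expect to be most delicate is the reduction of an arbitrary bracketed word in $x_1$ to a sum of right-normed $x_i$'s; it requires a careful induction on bracket depth rather than a one-line computation.
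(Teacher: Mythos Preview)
Your proposal is correct and follows essentially the same route as the paper: build the right-normed sequence $x_{i+1}=[x_i,x_1]$ from a generator, observe that $[x_i,x_j]=0$ for $j\ge 2$ (the paper phrases this as ``$x_2,\dots,x_n$ lie in the right annihilator'' and calls it evident; you give the explicit induction), and then extract $a_1=0$ from the Leibniz identity applied to $[x_1,[x_n,x_1]]$. The only difference is that you supply detailed justifications for the two steps the paper leaves terse---that the $x_i$ actually form a basis, and the right-annihilator vanishing---so your argument is a fleshed-out version of the same proof rather than an alternative one.
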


\begin{proof}
Let $L$ be a single-generated Leibniz algebra and  $x$  a
generator of $L$. We put $$x_1 = x,\quad x_2 = [x,x], \quad x_3 =
[[x,x],x], \quad \dots, \quad x_n = [[x,x],\dots,x].$$

Since $x$ is a generator, $\{x_1, x_2, \dots, x_n\}$ form a basis
of $L$. Evidently $\{x_2, \dots, x_n\}$ belong to the right
annihilator of $L$. Hence, we have $[x_i, x_j] = 0, \ 2\leq j \leq
n-1$. Let $[x_n, x_1] = \sum\limits_{k=1}^na_{k}x_k$.

From the Leibniz identity $[x_1, [x_n,x_1]] = [[x_1, x_n],x_1] -
[[x_1, x_n],x_1] =0$, we conclude $a_1=0$. Therefore, we obtain
the existence of a basis $\{x_1, x_2, \dots, x_n\}$ in any
single-generated Leibniz algebra such that the
multiplication table in this basis has the form:

\[\left\lbrace \begin{aligned}
{}[x_i, x_1] & = x_{i+1}, & 1\leq i
\leq n-1,\\
[x_n, x_1] &= \sum\limits_{k=2}^na_{k}x_k\,.
\end{aligned}\right.\]
\end{proof}

Let $a_j$ be the first non vanishing parameter in the algebra
$\mu(a_2, a_3, \dots, a_n)$, then by scaling $x_i' = \frac 1
{\sqrt[\uproot{4} n-j+1]{a_j^i}}x_i, \ 1\leq i \leq n$, we can assume
$a_j=1$, i.e., the first non vanishing parameter can be taken equal
to 1.

Note that the set of single-generated Leibniz algebras is open.
Indeed, if a $q$-generated ($q>1$) Leibniz algebra has a basis
$\{e_1, e_2, \dots, e_n \}$, then for any $e_i \in L$ the elements
$e_i, e_i^2, \dots, e_i^n$ are linearly dependent. That is,
determinants of the matrices $A_i, \ 1\leq i \leq n$, which
consists of the rows $e_i, e_i^2, \dots, e_i^n$ are zero, hence we
get $n$-times of polynomials with structure constants of the
algebra. Therefore, $q$-generated ($q>1$) Leibniz algebras form a
closed set. Taking into account that the set of all
single-generated Leibniz algebras is complemented set to a
closed set, we conclude that the set of single-generated Leibniz
algebras is open.

It is easy to see that an algebra $\mu_1(a_2, a_3, \dots, a_n)$ is
a linear deformation of an algebra $\mu_1(a'_2, a'_3, \dots,
a'_n)$.

Since $\dim(\Der(\mu_1(a_2, a_3, \dots, a_n)))=n-1, \ (a_2, a_3,
\dots, a_n)\neq (0, 0, \dots, 0)$, then by arguments used in
\cite{Burde1} for non-isomorphic algebras $\mu_1(a_2, a_3, \dots,
a_n)$ and $\mu_1(a'_2, a'_3, \dots, a'_n)$ we derive $\mu_1(a_2,
a_3, \dots, a_n)\notin \overline{\Orb(\mu_1(a'_2, a'_3, \dots,
a'_n))}$.

Summarizing these results on single-generated Leibniz algebras, we
obtain
\begin{thm} $\overline{\bigcup\limits_{a_2, \dots, a_n}\Orb(\mu_1(a_2, a_3,
\dots, a_n))}$ is an irreducible component. \end{thm}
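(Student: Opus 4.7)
The plan is to deduce the claim from the general fact that in any algebraic variety the closure of a non-empty, open, irreducible subset is an irreducible component. By the preceding proposition, the set
$$S := \bigcup_{(a_2,\ldots,a_n)}\Orb(\mu_1(a_2,\ldots,a_n))$$
coincides with the set of all single-generated Leibniz algebras in $Leib^{n,0}$, and this set has already been shown to be open (as the complement of the closed locus cut out by the vanishing of the $n\times n$ determinants formed from the powers of a generic element).

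First, I would verify that $S$ is irreducible. Consider the polynomial morphism
$$\Phi: GL_n(F) \times F^{n-1} \longrightarrow Leib^{n,0}, \qquad (g,(a_2,\ldots,a_n)) \longmapsto g\cdot \mu_1(a_2,\ldots,a_n).$$
Both $GL_n(F)$ and $F^{n-1}$ are irreducible, so the product is irreducible, and hence its image $S=\Phi(GL_n(F)\times F^{n-1})$ is irreducible, as is the closure $\overline S$.

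Next, I would establish maximality. Let $W$ be any irreducible closed subset of $Leib^{n,0}$ containing $\overline S$. Since $S\subseteq W$ and $S$ is open in $Leib^{n,0}$, the set $S$ is open in $W$ as well; being non-empty (it contains $NF^{n}=\mu_1(0,\ldots,0)$) and open in the irreducible $W$, it is dense in $W$, so $\overline S = W$. Hence $\overline S$ is a maximal irreducible closed subset of $Leib^{n,0}$, i.e., an irreducible component.

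The main obstacle is conceptual rather than technical: the real work (computing $HL^{2}(NF^{n},NF^{n})$, identifying the integrable deformations, reducing every single-generated algebra to the normal form $\mu_1(a_2,\ldots,a_n)$, and verifying openness of the single-generated locus) has already been completed in the preceding propositions. What remains is essentially an algebraic-geometric packaging argument; the only careful point is to keep track of which topology is ambient when transferring openness of $S$ from $Leib^{n,0}$ to the closed subset $W$, which the short argument above handles.
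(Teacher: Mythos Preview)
Your argument is correct and is, in fact, more transparent than the paper's own justification. The paper does not give a formal proof of this theorem; it simply lists, in the paragraphs preceding the statement, that (i) the single-generated locus is open, (ii) each $\mu_1(a_2,\ldots,a_n)$ is a linear deformation of every other $\mu_1(a'_2,\ldots,a'_n)$, and (iii) for nonzero parameters the derivation algebras all have dimension $n-1$, so by Burde-type arguments non-isomorphic $\mu_1$'s do not lie in each other's orbit closures. The theorem is then stated as a ``summary'' of these facts, with the irreducibility and maximality of $\overline{S}$ left implicit.

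Your approach differs in that you replace the somewhat informal ``mutual deformation'' argument for irreducibility by the explicit parametrization morphism $\Phi:GL_n(F)\times F^{n-1}\to Leib^{n,0}$, which makes irreducibility of $S$ immediate, and you obtain maximality directly from openness via the standard ``non-empty open in irreducible implies dense'' argument. This is the cleaner algebraic-geometric packaging. Note also that the paper's point (iii) about non-degeneration is not needed for the theorem as stated: it carries the additional information that no single orbit closure exhausts the component, but it plays no role in showing that $\overline{S}$ is an irreducible component, and you were right to omit it.
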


\section{Cohomology of Leibniz superalgebras}

In this section we describe all infinitesimal deformations of the
Leibniz superalgebra $NF^{n,m}$ and we prove similar results as in
previous section.

In the next proposition the description of even derivations of
$NF^{n,m}$ is given.

\begin{prop}\label{pr42} Any derivation of $\Der(NF^{n,m})_0$ has the
form:
\begin{align*}
d(y_j) & = (2j-1)a_1y_j+\sum\limits_{k=2}^{m+1-j}a_ky_{j+k-1}, & 1 \leq j \leq  m, \\
d(x_i) & = 2ia_1x_i+\sum\limits_{k=2}^{n+1-i}a_ix_{i+k-1}, & 1 \leq
i \leq n,    \
\end{align*}
 where $m=n$ or $m=n+1$.
\end{prop}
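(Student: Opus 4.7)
The plan is to exploit that $NF^{n,m}$ is single-generated in order to reduce the problem to computing $d(y_1)$ and then propagating by the derivation identity. First, since $d$ is even it preserves the $\mathbb{Z}_2$-grading, i.e.\ $d(L_0)\subseteq L_0$ and $d(L_1)\subseteq L_1$. Inspecting the multiplication table from Theorem \ref{t1} one sees that $y_1$ generates the whole superalgebra, since $x_1=[y_1,y_1]$, $y_{j+1}=[y_j,x_1]$ and $x_{i+1}=[x_i,x_1]$. Hence $d$ is completely determined by $d(y_1)\in L_1$, which supplies exactly $m$ free parameters; writing $d(y_1)=a_1y_1+a_2y_2+\cdots+a_my_m$ already matches the claimed formula for $j=1$.

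Next, I would compute $d(x_1)=d([y_1,y_1])$ via the degree-zero derivation identity $d([u,v])=[d(u),v]+[u,d(v)]$. Using $[y_k,y_1]=x_k$ for $1\le k\le n$, together with the fact that $[y_1,y_k]=0$ for $k\ge 2$ (such brackets are absent from the table and thus zero), I would obtain $d(x_1)=2a_1x_1+\sum_{k=2}^n a_k x_k$. The doubling of the $a_1$-coefficient here is the crucial phenomenon that produces the scalar $2i$ in the general formula for $d(x_i)$.

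Then I would run two parallel inductions, one on $j$ for $d(y_j)$ via the recurrence $d(y_{j+1})=[d(y_j),x_1]+[y_j,d(x_1)]$, and one on $i$ for $d(x_i)$ via either $x_{i+1}=[x_i,x_1]$ or $x_i=[y_i,y_1]$. Because the cross-products $[y_i,x_k]$ and $[x_i,x_k]$ with $k\ge 2$ all vanish (and likewise $[y_i,y_k]=0$ for $k\ge 2$), each inductive step contributes only a shifted copy of the sum $\sum a_k y_{j+k-1}$ (respectively $\sum a_k x_{i+k-1}$) plus one additional $2a_1$ term on the top-degree element. This increment raises the coefficient of $a_1$ by $2$ at each step, producing $(2j-1)a_1$ for $y_j$ and $2ia_1$ for $x_i$. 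A careful reading of the summation ranges then handles both cases $m=n$ and $m=n+1$ uniformly, since the absent brackets $[y_{n+1},y_1]$ and $[y_{n+1},x_1]$ vanish automatically and kill the would-be out-of-range summands.

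The main obstacle is not analytic but combinatorial bookkeeping: one must check the consistency of the two possible routes for $d(x_i)$ (via $[y_i,y_1]$ versus via $[x_{i-1},x_1]$), and verify that the linear map produced by the resulting formulas actually satisfies the derivation identity on \emph{every} pair of basis vectors, not just on those used to propagate the induction. This last step reduces to a routine verification of $d([u,v])=[d(u),v]+[u,d(v)]$ across the four families of non-trivial products in Theorem \ref{t1}, which, thanks to the sparseness of the table, boils down to matching index shifts on both sides.
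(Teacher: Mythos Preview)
Your proposal is correct and follows essentially the same route as the paper: set $d(y_1)=\sum_{k=1}^{m}a_ky_k$, compute $d(x_1)=d([y_1,y_1])=2a_1x_1+\sum_{k=2}^{n}a_kx_k$, and then propagate by induction via $d(y_{j+1})=[d(y_j),x_1]+[y_j,d(x_1)]$ and $d(x_i)=[d(y_i),y_1]+[y_i,d(y_1)]$, finally checking that no further relations arise from the remaining products. Your write-up is in fact more explicit than the paper's about why the leading coefficient increments by~$2$ at each step and about the need to verify the derivation identity on all basis pairs, but the underlying argument is the same.
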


\begin{proof} For $d \in  \Der(NF^{n,m})_0$ we put $d(y_1)=\sum\limits_{k=1}^{m}a_ky_{k}$.
Then using the properties of derivation and multiplication in the
superalgebra $NF^{n,m}$ we obtain $d(x_1) =
2a_1x_1+\sum\limits_{k=2}^{n}a_kx_{k}$.

Using induction, we deduce
$$d(y_{j+1}) = [d(y_j), x_1] + [y_j, d(x_1)] =(2j+1)a_1y_{j+1}+\sum\limits_{k=2}^{m-j}a_ky_{j+k},$$
$$d(x_i) = [d(y_i), y_1] + [y_i, d(y_1)] = 2ia_1x_i+\sum\limits_{k=2}^{n+1-i}a_kx_{i+k-1}.$$

The verification of the derivation property on other elements do not
give any additional restriction on $d$.
\end{proof}

Similarly we describe odd derivations of $\Der(NF^{n,m})$.
\begin{prop}\label{pr43} Any derivation of $\Der(NF^{n,m})_1$ has the
form
$$\begin{array}{ll}d(y_j) = \sum\limits_{k=1}^{n+1-j}b_kx_{j+k-1}, & 1 \leq j \leq  n, \\
d(x_i) = \frac 1 2(b_1y_{i+1}- \sum\limits_{k=2}^{m-i}b_kx_{i+k}),
& 1 \leq i \leq m-1,\end{array}$$ where $m=n$ or $m=n+1$.
\end{prop}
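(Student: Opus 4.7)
The plan is to mirror the proof of Proposition~\ref{pr42}, adjusted for the odd-degree derivation rule $d([a,b]) = (-1)^{|b|}[d(a),b] + [a,d(b)]$ (obtained from the general identity with $|d|=1$). Since $d$ is odd, we have $d(L_0)\subseteq L_1$ and $d(L_1)\subseteq L_0$, so I would begin by writing
$$d(y_1)=\sum_{k=1}^{n}b_kx_k$$
for arbitrary scalars $b_1,\dots,b_n$; these will turn out to be exactly the free parameters of an odd derivation.

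The first step is to determine $d(x_1)$ from the relation $x_1=[y_1,y_1]$. The odd rule yields $d(x_1)=-[d(y_1),y_1]+[y_1,d(y_1)]$. From the multiplication table, the only nonvanishing $[y_1,x_k]$ is $[y_1,x_1]=y_2$, while $[x_k,y_1]=\tfrac12 y_{k+1}$ for $1\le k\le m-1$ and $[x_k,y_1]=0$ otherwise. After collecting terms, this produces the claimed formula for $i=1$, with the sum automatically truncating at $k=m-1$ so that in the case $m=n$ the parameter $b_n$ does not appear in $d(x_1)$.

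Next I would set up a double induction: $d(y_{j+1})$ is obtained from $y_{j+1}=[y_j,x_1]$, and $d(x_{i+1})$ from $x_{i+1}=[x_i,x_1]$. Each recursion simplifies sharply because the already-computed $d(x_1)$ lies in the span of $\{y_l: l\ge 2\}$, and in $NF^{n,m}$ the brackets $[y_j,y_l]$ and $[x_i,y_l]$ vanish for $l\ge 2$; hence the terms $[y_j,d(x_1)]$ and $[x_i,d(x_1)]$ drop out, leaving only an index shift on the $b_k$'s. Matching the result against the claimed formulas is then a routine indexing check.

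The final step is to verify that the remaining defining relations, namely $[y_i,y_1]=x_i$ and $[x_i,y_1]=\tfrac12 y_{i+1}$ for $i\ge 2$, are preserved by the proposed $d$ and impose no further constraints on the $b_k$'s. The main technical obstacle is the careful bookkeeping across the two cases $m=n$ and $m=n+1$: one must ensure that nonexistent terms $y_{m+1}$ or $x_{n+1}$ never arise at the induction boundary, and that the upper limits of the summations in $d(y_j)$ and $d(x_i)$ match the claimed ranges precisely, with $b_n$ correctly dropping out of $d(x_i)$ exactly in the case $m=n$ (because $[x_n,y_1]=0$ there) and persisting in the case $m=n+1$.
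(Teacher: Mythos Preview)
Your proposal is correct and follows exactly the approach the paper intends: the paper gives no explicit proof of this proposition, stating only that odd derivations are described ``similarly'' to Proposition~\ref{pr42}, and your outline is precisely that adaptation (set $d(y_1)$, compute $d(x_1)$ from $x_1=[y_1,y_1]$, then induct via $y_{j+1}=[y_j,x_1]$ and $x_{i+1}=[x_i,x_1]$, observing that the $[\,\cdot\,,d(x_1)]$ terms vanish). The boundary bookkeeping you flag for the two cases $m=n$ and $m=n+1$ is the only subtlety, and you have identified it correctly; note also that the paper's displayed formula for $d(x_i)$ contains an evident misprint ($x_{i+k}$ should read $y_{i+k}$), which your computation would automatically correct.
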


Now we shall consider infinitesimal deformations of the
superalgebra $NF^{n,m}$, i.e., elements of the space
$ZL_0^2(NF^{n,m}, NF^{n,m})$.

\subsection{The case $m=n$}

\

In this case we give the description of the  infinitesimal deformations
of the superalgebra $NF^{n,n}$.

\begin{prop} \label{4.4} An arbitrary infinitesimal deformation $\varphi$ of $NF^{n,n}$
has the following form:
\[\begin{cases} \varphi(y_j, y_1) =
\sum\limits_{k=1}^{n}\alpha_{j,k}x_k, & 1 \leq  j \leq   n,\\
 \varphi(x_j, y_1) = \sum\limits_{k=1}^{n}\beta_{j,k}y_k, & 1 \leq j \leq  n-1,\\
 \varphi(x_n, y_1) = \sum\limits_{k=2}^{n}\beta_{n,k}y_k, &\\
 \varphi(x_j, x_1) = -\alpha_{1,1}x_{i+1} +
 \sum\limits_{k=1}^{n}(\alpha_{j+1,k}+2\beta_{j,k})x_k, &  1 \leq  j \leq  n-1,\\
 \varphi(x_n, x_1) = 2\sum\limits_{k=2}^{n}\beta_{n,k}x_k, & \\
\varphi(y_j, x_1) = 2\beta_{j,1}y_1 - \alpha_{1,1}y_{j+1}+
\sum\limits_{k=2}^{n}(\alpha_{j,k-1}+2\beta_{j,k})y_k, &  1 \leq  j \leq  n-1,\\
\varphi(y_n, x_1) =
\sum\limits_{k=2}^{n}(\alpha_{n,k-1}+2\beta_{n,k})y_k, & \\
 \varphi(x_i, x_{j+1}) = - (\alpha_{j+1,1}+2\beta_{j,1})x_{i+1}, & 1 \leq i \leq  n-1, \ 1 \leq j \leq n-1,\\
\varphi(y_i, x_{j+1}) = - (\alpha_{j+1,1}+2\beta_{j,1})y_{i+1}, & 1 \leq i \leq  n-1, \ 1 \leq j \leq n-1,\\
\varphi(x_i, y_{j+1}) = - \beta_{j,1}y_{i+1}, & 1 \leq i \leq  n-1, \ 1 \leq j \leq n-1,\\
\varphi(y_i, y_{j+1}) = - 2\beta_{j,1}x_{i}, & 1 \leq i \leq  n, \ 1 \leq j \leq n-1.
\end{cases}\]
\end{prop}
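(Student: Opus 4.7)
The plan is to mimic the strategy of Proposition 3.1, adapted to the super setting. Since $\varphi$ is even, parity forces $\varphi|_{L_0\otimes L_0}$ and $\varphi|_{L_1\otimes L_1}$ to take values in $L_0$, and $\varphi|_{L_0\otimes L_1}$ and $\varphi|_{L_1\otimes L_0}$ to take values in $L_1$. I would take as initial unknowns the coefficients
$$\varphi(y_j, y_1) = \sum_{k=1}^n \alpha_{j,k} x_k, \qquad \varphi(x_j, y_1) = \sum_{k=1}^n \beta_{j,k} y_k, \qquad 1 \leq j \leq n,$$
and aim to express every other value of $\varphi$ on a basis pair in terms of these.

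First I would evaluate $d^2\varphi$ on the base triples $(y_i, y_1, y_1)$ and $(x_i, y_1, y_1)$. Using $[y_1, y_1] = x_1$, $[y_i, y_1] = x_i$, $[x_i, y_1] = \tfrac{1}{2} y_{i+1}$ together with the super sign $(-1)^{|y_1||y_1|} = -1$, these relations solve algebraically for $\varphi(y_i, x_1)$ and $\varphi(x_i, x_1)$ respectively. Next, the triples $(y_i, x_1, y_1)$ and $(x_i, x_1, y_1)$ yield $\varphi(y_i, y_2)$ and $\varphi(x_i, y_2)$, while the triples $(y_i, x_1, x_1)$ and $(x_i, x_1, x_1)$ yield $\varphi(y_i, x_2)$ and $\varphi(x_i, x_2)$; in both of the latter pairs certain central terms cancel, leaving a single equation that closes in one step. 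Together these four computations produce the proposition's formulas in the $j=1$ case.

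Second, the values with $j \geq 2$ are obtained by induction on $j$: using $y_{j+1} = [y_j, x_1]$ and $x_{j+1} = [x_j, x_1]$, the cocycle identities on $(z_i, y_j, x_1)$ and $(z_i, x_j, x_1)$ with $z \in \{x, y\}$ express $\varphi(z_i, y_{j+1})$ and $\varphi(z_i, x_{j+1})$ in terms of the lower-$j$ data. Since most products $[y_j, -]$ and $[x_j, -]$ vanish for $j \geq 2$, the recursions collapse to the stated closed-form answers. The boundary constraint $\beta_{n,1} = 0$ emerges from the $i = n$ instance of $d^2\varphi(y_n, y_1, x_1) = 0$: after all substitutions the only residual term is $2\beta_{n,1} x_1$, forcing the vanishing.

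Finally, one verifies that every remaining 2-cocycle identity on basis triples is automatically satisfied by the formulas just derived. The main obstacle is combinatorial bookkeeping: tracking the $(-1)^{|x_i||x_j|}$ signs arising from the Leibniz superidentity, handling the boundary cases $i = n$ where $x_{n+1} = y_{n+1} = 0$, and above all ensuring that the many overlapping cocycle equations yield a consistent system with no hidden extra constraints on the parameters $\alpha_{j,k}$ and $\beta_{j,k}$.
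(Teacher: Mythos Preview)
Your plan is essentially the paper's own argument: start from the free parameters $\alpha_{j,k},\beta_{j,k}$ in $\varphi(y_j,y_1)$ and $\varphi(x_j,y_1)$, use $(d^2\varphi)(\,\cdot\,,y_1,y_1)=0$ to determine $\varphi(\,\cdot\,,x_1)$, and then propagate to the remaining second arguments. The only tactical difference is that the paper, instead of running an induction on $j$ via the single triples $(z_i,x_j,x_1)$ and $(z_i,y_j,x_1)$, adds the pairs $(z_i,x_1,x_j)+(z_i,x_j,x_1)$ and $(z_i,y_1,x_j)+(z_i,x_j,y_1)$; this summing trick kills the cross terms and yields the closed form $\varphi(z_i,x_{j+1})=-[z_i,\varphi(x_1,x_j)+\varphi(x_j,x_1)]$ (and its odd analogue) in one step, avoiding the bookkeeping you anticipate. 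Your inductive route also works and gives the same formulas.

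There is, however, one concrete slip. The constraint $\beta_{n,1}=0$ does \emph{not} come from $d^2\varphi(y_n,y_1,x_1)=0$: if you substitute the expressions already obtained for $\varphi(y_1,x_1)$, $\varphi(y_n,y_1)$, $\varphi(y_n,x_1)$, $\varphi(y_n,y_2)$ and $\varphi(x_n,x_1)$ (keeping $\beta_{n,1}$ arbitrary), all terms cancel and the identity holds trivially. In the paper's approach the vanishing of $\beta_{n,1}$ is read off from the $j=n$ boundary of the summed identity, namely $0=[x_i,\varphi(x_1,x_n)+\varphi(x_n,x_1)]$, whose only surviving contribution is $2\beta_{n,1}x_{i+1}$. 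In your own scheme the same constraint appears at the $j=n$ edge of the recursion, i.e.\ from $d^2\varphi(x_i,x_n,x_1)=0$, not from the odd triple you cite.
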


\begin{proof} Let $ \varphi \in ZL_0^2(NF^{n,n}, NF^{n,n})$. We set
$$\varphi(y_j, y_1) = \sum\limits_{k=1}^{n} \alpha_{j,k}x_k,\quad
\varphi(x_j, y_1) = \sum\limits_{k=1}^{n} \beta_{j,k}y_k, \quad  1 \leq j \leq n.$$

Applying the multiplication of the superalgebra and the property
of cocycle for $d^2\varphi(x_j, y_1, y_1)=0$, we obtain
$$\varphi(x_j, x_1) = -\alpha_{1,1}x_{j+1} +
 \sum\limits_{k=2}^{n}(\alpha_{j+1,k-1}+2\beta_{j,k})x_k, \ 1 \leq j \leq n-1,\quad \varphi(x_n, x_1) =  2\sum\limits_{k=1}^{n}\beta_{n,k}x_k.$$

Analogously, from $d^2\varphi(y_j, y_1, y_1)=0$ we get
$$\varphi(y_j, x_1) = 2\beta_{j,1}y_1 - \alpha_{1,1}y_{j+1}+
\sum\limits_{k=2}^n(\alpha_{j,k-1} + 2\beta_{j,k})y_k, \quad 1 \leq j \leq n.$$

The equations $d^2\varphi(x_i, x_1, x_1)=0$ and $d^2\varphi(y_i,
x_1, x_1)=0$ imply
\begin{align*}
\varphi(x_i, x_2) &= - [x_i,\varphi(x_1, x_1)] = -(\alpha_{2,1}+2\beta_{1,1})x_{i+1}, && 1 \leq i \leq n-1,\\
\varphi(y_i, x_2) &= - [y_i,\varphi(x_1, x_1)] = -(\alpha_{2,1}+2\beta_{1,1})y_{i+1}, &&  1 \leq i \leq n-1.
\end{align*}

Using the conditions $d^2 \varphi(x_i, x_1, x_j) = 0$ and $d^2
\varphi(x_i, x_j, x_1) = 0$ for $1 \leq i \leq n$, $ 2 \leq j \leq
n$, we derive
\begin{align*}
[x_i,\varphi(x_1,x_j)] + [\varphi(x_i,x_j), x_1] -  \varphi([x_i,x_1],x_j) &=0,\\
[x_i,\varphi(x_j,x_1)] - [ \varphi(x_i,x_j), x_1]  +
\varphi(x_i, [x_j,x_1])  +  \varphi([x_i,x_1],x_j)&=0.
\end{align*}

Summarizing these equalities, we deduce
$$\varphi(x_i, x_{j+1}) = - [x_i,  \varphi(x_1, x_j)+\varphi(x_j, x_1)] = -(\alpha_{j+1,1}+2\beta_{j,1})x_{i+1}, \quad 1 \leq i \leq n-1, \ 2 \leq j \leq n-1,$$
and $0=[x_i,  \varphi(x_1, x_n)+\varphi(x_n, x_1)] = \beta_{n,1}y_{i+1}$, which implies $\beta_{n,1}=0$.

Similarly from $d^2 \varphi(y_i, x_1, x_j) = 0$ and $d^2
\varphi(y_i, x_j, x_1) = 0$ we obtain
$$\varphi(y_i, x_{j+1}) = - (\alpha_{j+1,1}+2\beta_{j,1})y_{i+1}, \quad 1 \leq i \leq n-1, \ 2 \leq j \leq n-1.$$

Considering the properties $(d^2 \varphi)(x_i, y_1, x_j) = 0$ and
$(d^2 \varphi)(x_i, x_j, y_1) = 0$ for $1 \leq i,j \leq n$, we
have
$$[x_i,\varphi(y_1,x_j)] - [\varphi(x_i,y_1), x_j] + [\varphi(x_i,x_j), y_1] + \varphi(x_i, [y_1,x_j])-  \varphi([x_i,y_1],x_j) + \varphi([x_i,x_j],y_1)=0,$$
$$[x_i,\varphi(x_j, y_1)] - [\varphi(x_i,x_j), y_1] + [\varphi(x_i,y_1), x_j] + \varphi(x_i, [x_j,y_1])-  \varphi([x_i,x_j],y_1) + \varphi([x_i,y_1],x_j)=0.$$

Again, summarizing these equalities, we get $\varphi(x_i, [y_1,
x_j]+[x_j, y_1]) = - [x_i, \varphi(y_1, x_j)+\varphi(x_j, y_1)]$,
from which we have
\begin{align*}
\varphi(x_i, y_{2}) & = - \frac 2 3 [x_i,
\varphi(y_1, x_1)+\varphi(x_1, y_1)] = -\beta_{1,1}y_{i+1}, &&
1 \leq i \leq n-1,\\
\varphi(x_i, y_{j+1}) & = - 2[x_i,  \varphi(y_1, x_j)+\varphi(x_j, y_1)] = -\beta_{1,1}y_{i+1}, && 1 \leq i \leq n-1, \ 2 \leq j \leq n-1.
\end{align*}

Applying the above arguments to the equalities $(d^2 \varphi)(y_i,
y_1, x_j) = 0$ and $(d^2 \varphi)(y_i, x_j, y_1) = 0$ for $1 \leq
i,j \leq n$, we get
$$\varphi(y_i, y_{j+1}) =  -2\beta_{j,1}x_{i}, \quad 1 \leq i \leq n, \ 1 \leq j \leq n-1.$$

Checking the general condition of cocycle for the other basis
elements we get already obtained restrictions.
\end{proof}

Using the assertion of Proposition \ref{4.4} we indicate a basis
of the space $ZL_0^2(NF^{n,n}, NF^{n,n})$.

\begin{thm}  The following cochains $\varphi_{j,k,}, \psi_{j,k}:$
$$\varphi_{1,1}:\left\{\begin{array}{ll}\varphi_{1,1}(y_1, y_1) = x_1, & \\
 \varphi_{1,1}(x_i, x_1) = -x_{i+1}, & 1 \leq i \leq n-1,\\
 \varphi_{1,1}(y_i, x_1) = -y_{i+1}, & 2 \leq i \leq
 n-1,\end{array}\right. \
 \varphi_{j,1}(2 \leq j \leq n): \left\{\begin{array}{ll} \varphi_{j,1}(y_j, y_1) = x_1,&\\
\varphi_{j,1}(x_{j-1}, x_1) = x_1, & \\ \varphi_{j,1}(y_j, x_1) = y_2, & \\
\varphi_{j,1}(x_i, x_j) = -x_{i+1},& 1 \leq i \leq n-1,\\
\varphi_{j,1}(y_i, x_j) = -y_{i+1}, & 1 \leq i \leq
n-1,\end{array}\right.$$

$$\begin{array}{ll}
\varphi_{1,k}(2 \leq k \leq n-1):\left\{\begin{array}{l}\varphi_{1,k}(y_1, y_1) = x_k, \\
 \varphi_{1,k}(y_1, x_1) = y_{k+1},\end{array}\right. &
 \varphi_{1,n}:\left\{\begin{array}{l}\varphi_{1,n}(y_1, y_1) =
 x_n.\end{array}\right. \end{array}$$

$$\begin{array}{ll} \varphi_{j,k}\left(\begin{array}{l}2 \leq j \leq n, \\ 2 \leq k \leq n-1\end{array}\right): \left\{\begin{array}{l} \varphi_{j,k}(y_j, y_1) = x_k,\\
\varphi_{j,k}(x_{j-1}, x_1) = x_k, \\
 \varphi_{j,k}(y_j, x_1) = y_{k+1},\end{array}\right. &
 \varphi_{j,n}(2 \leq j \leq n): \left\{\begin{array}{l} \varphi_{j,n}(y_j, y_1) = x_n,\\
\varphi_{j,n}(x_{j-1}, x_1) = x_n,\end{array}\right.\end{array}$$

\[ \psi_{j,1}(1 \leq j \leq n-1):\left\{\begin{array}{ll}\psi_{j,1}(x_j, y_1) = y_1, & \\
\psi_{j,1}(x_j, x_1) = 2x_{1}, & \\
\psi_{j,1}(y_j, x_1) = 2y_{1}, & \\
\psi_{j,1}(x_i, x_{j+1}) = -2x_{i+1}, & 1 \leq i \leq n-1,
\\ \psi_{j,1}(y_i, x_{j+1}) = -2y_{i+1}, & 1 \leq i \leq n-1,\\
\psi_{j,1}(x_i, y_{j+1}) = -y_{i+1}, & 1 \leq i \leq n-1,\\
\psi_{j,1}(y_i, y_{j+1}) = -2x_{i}, & 1 \leq i \leq n,
\end{array}\right.
\psi_{j,k}\left(\begin{array}{l}1 \leq j \leq n, \\ 2 \leq k \leq n\end{array}\right):\left\{\begin{array}{l}\psi_{j,k}(x_j, y_1) = y_k,\\
\psi_{j,k}(x_j, x_1) = 2x_{k},\\
\psi_{j,k}(y_j, x_1) = 2y_{k}.\end{array}\right.
\]

form a basis of the space $ZL_0^2(NF^{n,n}, NF^{n,n})$.
\end{thm}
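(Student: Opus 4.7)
The plan is to read off the basis directly from the parametrization already obtained in Proposition \ref{4.4}. That proposition describes every even 2-cocycle $\varphi \in ZL_0^2(NF^{n,n}, NF^{n,n})$ in terms of $n^2$ scalars $\alpha_{j,k}$ ($1 \le j,k \le n$) together with $n^2 - 1$ scalars $\beta_{j,k}$ (the only constraint being $\beta_{n,1} = 0$), giving $\dim ZL_0^2(NF^{n,n}, NF^{n,n}) = 2n^2 - 1$. A direct index count shows the listed cochains number exactly $2n^2 - 1$, so the theorem will follow once each listed cochain is identified with the cocycle produced by switching on a single parameter.

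I would define $\varphi_{j,k}$ to be the cocycle obtained from Proposition \ref{4.4} by taking $\alpha_{j,k} = 1$ with the remaining parameters equal to zero, and $\psi_{j,k}$ to be the cocycle obtained by taking $\beta_{j,k} = 1$ with the others zero. Substituting such a choice into the explicit formulas of Proposition \ref{4.4} must then recover, line by line, the multiplication tables in the statement. For example, $\alpha_{1,1} = 1$ yields $\varphi(y_1, y_1) = x_1$, $\varphi(x_i, x_1) = -x_{i+1}$ for $1 \le i \le n-1$, and $\varphi(y_i, x_1) = -y_{i+1}$ only for $2 \le i \le n-1$ (the case $i=1$ cancels against the $\alpha_{1,1}y_2$ contribution, and $i = n$ falls into the separate formula for $\varphi(y_n, x_1)$), matching $\varphi_{1,1}$; the longer table defining $\psi_{j,1}$ similarly reflects that $\beta_{j,1}$ enters six of the formulas of Proposition \ref{4.4} simultaneously.

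Linear independence is then automatic by evaluation. A purported relation $\sum c_{j,k} \varphi_{j,k} + \sum d_{j,k} \psi_{j,k} = 0$, applied to the pair $(y_j, y_1)$, reads off each $c_{j,k}$ as the coefficient of $x_k$; applied to $(x_j, y_1)$, it reads off each $d_{j,k}$ as the coefficient of $y_k$. The spanning property is built into Proposition \ref{4.4}, which already represents an arbitrary cocycle as $\sum \alpha_{j,k} \varphi_{j,k} + \sum \beta_{j,k} \psi_{j,k}$.

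The main obstacle is bookkeeping rather than substance: one must carefully handle the boundary cases where the general formulas degenerate. At $k = n$ there is no $y_{n+1}$, which is why $\varphi_{1,n}$ and $\varphi_{j,n}$ have strictly fewer defining relations; at $j = n$ the constraint $\beta_{n,1} = 0$ explains both the absence of $\psi_{n,1}$ from the list and why $\varphi(x_n, y_1)$ starts at $k = 2$; and the various index shifts inside Proposition \ref{4.4} (for instance the $\alpha_{j+1,k-1}$ appearing in $\varphi(x_j, x_1)$) must be tracked when matching a single-parameter choice to the listed multiplication table. A systematic case split on whether the nonzero parameter is an $\alpha$ or a $\beta$, with subcases according to whether the first or second index is maximal, handles the verification cleanly.
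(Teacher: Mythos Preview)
Your proposal is correct and follows exactly the route the paper takes: the theorem is stated immediately after the sentence ``Using the assertion of Proposition~\ref{4.4} we indicate a basis of the space $ZL_0^2(NF^{n,n}, NF^{n,n})$'' and is given no separate proof, so the intended argument is precisely the one you describe---set a single parameter $\alpha_{j,k}$ or $\beta_{j,k}$ equal to $1$ in the parametrization of Proposition~\ref{4.4}, read off the resulting cochain, and use the dimension count $2n^2-1$ together with the obvious evaluation argument for linear independence. One small bookkeeping slip: in the displayed formula of Proposition~\ref{4.4} the term in $\varphi(x_j,x_1)$ is $\alpha_{j+1,k}$, not $\alpha_{j+1,k-1}$ as you wrote, but this does not affect your argument.
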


Applying the same arguments as used in the proof of Proposition~\ref{pr4}
we prove the following result.

\begin{prop} The 2-cochains $\xi_{j,k}$ and
$\zeta_{j,k}$ defined as follows:
$$\left\{\begin{array}{lll}\xi_{j,k} =  \varphi_{j,k}, & 1 \leq j \leq n, & j \leq k \leq n,\\
\xi_{j,k} =  \varphi_{j,k} - \frac 1 2 \psi_{j,k+1},& 2 \leq j \leq n, & 1 \leq k \leq j-1,\\
\zeta_{j,k} =  \psi_{j-1,k},& 2 \leq j \leq n, & j \leq k \leq n,\\
\zeta_{j,k} =  \frac 1 2 \psi_{j-1,k} - \varphi_{j,k},& 2 \leq j \leq n, & 1 \leq k \leq j-1,\\
\end{array}\right.$$
form a basis of $BL_0^2(NF^{n,n}, NF^{n,n})$.
\end{prop}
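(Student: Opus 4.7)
The plan is to adapt the argument of Proposition \ref{pr4} to the even super-setting. First, one identifies a complement of $ZL^1_0(NF^{n,n},NF^{n,n})=\Der(NF^{n,n})_0$ inside $C^1_0(NF^{n,n},NF^{n,n})$. By Proposition \ref{pr42}, an even derivation is completely determined by the image of $y_1$, whence $\dim\Der(NF^{n,n})_0=n$, while $\dim C^1_0=2n^2$ for $m=n$, giving $\dim BL^2_0=2n^2-n$. The complementary family I would take consists of the elementary endomorphisms
\[
f^x_{j,k}\colon\ x_j\mapsto x_k\quad(1\le j,k\le n),\qquad
f^y_{j,k}\colon\ y_j\mapsto y_k\quad(2\le j\le n,\ 1\le k\le n),
\]
extended by zero on the remaining basis vectors; there are $n^2+n(n-1)=2n^2-n$ of them, matching $\dim BL^2_0$.

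Second, I would compute each $d^1 f$ via the $n=1$ specialisation of the coboundary formula for even $f$, namely $(d^1 f)(a,b)=[f(a),b]+[a,f(b)]-f([a,b])$. Because each endomorphism is concentrated on a single basis vector, only a handful of pairs $(a,b)$ contribute: the pair whose first argument equals the support vector, and those pairs whose bracket equals the support vector. A direct matching of these values against the list of basic 2-cocycles from the preceding theorem shows that each $d^1 f^x_{j,k}$ and $d^1 f^y_{j,k}$ can be written as a short linear combination of $\varphi_{j',k'}$'s and $\psi_{j',k'}$'s.

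Third, a dichotomy emerges that mirrors Proposition \ref{pr4}. When the target index $k$ is at least the source index $j$, only one basic cocycle survives in the image, and by telescoping sums of the elementary coboundaries — in the spirit of the identity $d^1 f_{k,s}+d^1 f_{k+1,s+1}+\cdots=\varphi_{k-1,s}$ used in the previous section — one recovers $\xi_{j,k}=\varphi_{j,k}$ and $\zeta_{j,k}=\psi_{j-1,k}$. When $k<j$, two basic cocycles appear simultaneously in $d^1 f$ and one is forced into the combinations $\xi_{j,k}=\varphi_{j,k}-\tfrac12\psi_{j,k+1}$ and $\zeta_{j,k}=\tfrac12\psi_{j-1,k}-\varphi_{j,k}$ as displayed. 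Since the total count $|\{\xi_{j,k}\}|+|\{\zeta_{j,k}\}|=n^2+n(n-1)=2n^2-n$ equals $\dim BL^2_0$, these spanning vectors form a basis.

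The main obstacle is combinatorial rather than conceptual. The cochain $\psi_{j,1}$ has a qualitatively different and much larger support than $\psi_{j,k}$ with $k\ge 2$, and in the case $m=n$ several structure constants vanish at the boundary indices ($[y_n,x_1]=0$, $[x_n,y_1]=0$, $[x_n,x_1]=0$). These edge cases demand individual verification to ensure that the unified formulas for $\xi_{j,k}$ and $\zeta_{j,k}$ hold throughout the stated index ranges.
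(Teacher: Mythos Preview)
Your proposal is correct and follows essentially the same approach as the paper, which in fact proves this proposition only by the one-line remark ``Applying the same arguments as used in the proof of Proposition~\ref{pr4}''; your plan of choosing an explicit complement to $\Der(NF^{n,n})_0$ via elementary endomorphisms, computing their images under $d^1$, and reorganising by telescoping sums into the $\xi_{j,k}$ and $\zeta_{j,k}$ is exactly that argument carried out in the super setting. The dimension count $2n^2-n$ and the handling of the boundary cases you flag are the only points requiring care, and you have identified them correctly.
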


\begin{cor} $\{\overline{\psi_{n,2}},\overline{\psi_{n,3}}  , \dots,
\overline{\psi_{n,n}}\}$ form a basis of $HL_0^2(NF^{n}, NF^{n})$.
\end{cor}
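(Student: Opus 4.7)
The first step is a dimension count. Summing the sizes of the two families $\varphi_{j,k}$ and $\psi_{j,k}$ listed in the preceding theorem yields $\dim ZL_0^2(NF^{n,n},NF^{n,n}) = n^2 + (n^2-1) = 2n^2-1$. From the basis of $BL_0^2$ in the preceding proposition, $\dim BL_0^2 = \frac{n(n+1)}{2} + 3\cdot\frac{n(n-1)}{2} = 2n^2-n$. Hence $\dim HL_0^2 = n-1$, which is exactly the cardinality of the proposed set, so it suffices to show that $\overline{\psi_{n,2}},\dots,\overline{\psi_{n,n}}$ span $HL_0^2$.

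The next step is to read off the congruences in $HL_0^2$ that are forced by the four families generating $BL_0^2$. From $\xi_{j,k} = \varphi_{j,k}$ (for $j \le k$) one gets $\overline{\varphi_{j,k}} \equiv 0$ whenever $j \le k$; from $\zeta_{j,k} = \psi_{j-1,k}$ (for $j \le k$) one gets $\overline{\psi_{l,k}} \equiv 0$ whenever $l < k$; from $\xi_{j,k} = \varphi_{j,k} - \frac{1}{2}\psi_{j,k+1}$ (for $k < j$) one gets $\overline{\varphi_{j,k}} \equiv \frac{1}{2}\overline{\psi_{j,k+1}}$; and from $\zeta_{j,k} = \frac{1}{2}\psi_{j-1,k} - \varphi_{j,k}$ (for $k < j$) one gets $\overline{\psi_{j-1,k}} \equiv 2\overline{\varphi_{j,k}}$. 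Combining the last two produces the key shift relation
\[\overline{\psi_{l,k}} \equiv \overline{\psi_{l+1,k+1}}, \qquad 1 \le k \le l \le n-1.\]

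With these relations, every generator of $ZL_0^2$ can be reduced. The classes $\overline{\psi_{l,k}}$ with $l < k$ vanish; those with $k \le l \le n-1$ are shifted $n-l$ times to $\overline{\psi_{n,\,k+n-l}}$, and a direct check gives $2 \le k+n-l \le n$, so the result lies in the proposed spanning set. The classes $\overline{\varphi_{j,k}}$ vanish for $j \le k$, while for $k < j$ they first become $\frac{1}{2}\overline{\psi_{j,k+1}}$ and then reduce by the same shift. Consequently the $n-1$ classes $\overline{\psi_{n,2}},\dots,\overline{\psi_{n,n}}$ span $HL_0^2$, and together with the dimension count they form a basis.

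The main difficulty is purely combinatorial bookkeeping: one must confirm that the index ranges of the four families of $BL_0^2$-generators are arranged so that the shift can always be iterated the required number of times within the allowed index set, and in particular that $\psi_{n,1}$, which is not a basis element of $ZL_0^2$, is never reached by the reduction. No deeper cohomological argument is needed once the four congruences above have been recorded.
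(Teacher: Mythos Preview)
Your proof is correct and follows the approach implicit in the paper. The paper itself offers no proof of this corollary beyond listing the dimensions $2n^2-1$, $2n^2-n$, and $n-1$ immediately afterward; your argument supplies exactly the details the paper omits, namely the verification that the congruences coming from the four families $\xi_{j,k}$, $\zeta_{j,k}$ reduce every cocycle class to a combination of $\overline{\psi_{n,2}},\dots,\overline{\psi_{n,n}}$ via the shift relation $\overline{\psi_{l,k}}\equiv\overline{\psi_{l+1,k+1}}$.
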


Consequently, $$\begin{array}{l}\dim ZL_0^2(NF^{n,n}, NF^{n,n}) =
2n^2 - 1, \\ \dim BL_0^2(NF^{n,n}, NF^{n,n})=2n^2-n, \\ \dim
HL^2_0(NF^{n}, NF^{n}) = n - 1.\end{array}$$

In the next proposition we clarify taht basis element of
$ZL_0^2(NF^{n,n},NF^{n,n})$ satisfies the condition \eqref{E:2.1}.

\begin{prop} The infinitesimal deformations $\varphi_{j,k} \ (1 \leq j \leq n, 2 \leq k \leq n)$
and $\psi_{j,k} \ (1 \leq j \leq n, 2 \leq k \leq n)$ satisfy the
condition \eqref{E:2.1}, but the 2-cocycles $\varphi_{j,1} \ (1 \leq j \leq
n)$ and $\psi_{j,1} \ (1 \leq j \leq n-1)$ do not satisfy the
condition \eqref{E:2.1}.
\end{prop}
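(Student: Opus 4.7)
The plan is to exploit the sparse support and image of each basis cocycle and reduce condition \eqref{E:2.1} to a small finite check. First I would record two structural facts read off the explicit formulas: (i) for every basis cocycle with $k \geq 2$, each nonzero evaluation $\varphi_{j,k}(u,v)$ or $\psi_{j,k}(u,v)$ has its second argument $v \in \{x_1,y_1\}$; (ii) the image of such a cocycle lies in the span of $\{x_k, y_k, y_{k+1}\}$, which is disjoint from $\{x_1,y_1\}$ since $k \geq 2$. Combining (i) and (ii), the output of the inner $\varphi$ can never occupy the second slot of the outer, so the summand $\varphi(x,\varphi(y,z))$ of \eqref{E:2.1} vanishes identically for every basis cocycle with $k\geq 2$.

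For the remaining two summands, the inner output must instead match a valid first-argument pattern of the outer. Matching the image $\{x_k, y_{k+1}\}$ (respectively $\{x_k, y_k\}$) against the available first arguments rules out every case except $k = j-1$ for $\varphi_{j,k}$ with $j \geq 2$ (where $x_k = x_{j-1}$ and $y_{k+1} = y_j$), and $k = j$ for $\psi_{j,k}$ with $j \geq 2$. Outside these two exceptional families all three summands vanish and \eqref{E:2.1} is trivial. Inside them, the finitely many triples $(x,y,z)$ producing any nonzero term are enumerated directly. The representative check is $(y_j, y_1, x_1)$ for $\varphi_{j,j-1}$, which yields $0 - \varphi(x_{j-1},x_1) + \varphi(y_j,y_1) = -x_{j-1} + x_{j-1} = 0$, and $(x_j, y_1, x_1)$ for $\psi_{j,j}$, which yields $0 - \psi(y_j,x_1) + \psi(2x_j,y_1) = -2y_j + 2y_j = 0$; the remaining triples in each family cancel by the same template.

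For the negative statement, the image of $\varphi_{j,1}$ or $\psi_{j,1}$ already contains $x_1$ or $y_1$, so the obstruction that vanished above now persists. I would exhibit explicit counterexamples: for $\varphi_{1,1}$ the triple $(x_1,y_1,y_1)$ gives $\varphi_{1,1}(x_1,\varphi_{1,1}(y_1,y_1)) = \varphi_{1,1}(x_1,x_1) = -x_2$ in the first summand while the other two vanish; for $\varphi_{j,1}$ with $j \geq 2$ the triple $(y_j,y_j,y_1)$ gives $\varphi_{j,1}(y_j,\varphi_{j,1}(y_j,y_1)) = \varphi_{j,1}(y_j,x_1) = y_2$; for $\psi_{j,1}$ the triple $(x_j,x_j,y_1)$ gives $\psi_{j,1}(x_j,\psi_{j,1}(x_j,y_1)) = \psi_{j,1}(x_j,y_1) = y_1$. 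In each of these cases the remaining two summands of \eqref{E:2.1} vanish because the relevant arguments fall outside the support, so the identity fails.

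The main technical obstacle is the cancellation bookkeeping inside the exceptional families $k = j-1$ and $k = j$, where overlap of image with the first-argument slot produces several surviving triples. This enumeration is finite and parity-controlled by the sign $(-1)^{|y||z|}$, which pairs each nonzero contribution with its cancelling partner, so the verification is essentially mechanical once the overlapping indices have been correctly identified.
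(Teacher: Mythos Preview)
Your argument is correct and is considerably more explicit than the paper's, which simply states that the proof is carried out by the same method as Proposition~\ref{pr35} and gives no details. Your structural observation---that for $k\ge 2$ the image of each basis cocycle is disjoint from $\{x_1,y_1\}$, so the first summand of \eqref{E:2.1} dies automatically and the other two survive only in the overlap families $\varphi_{j,j-1}$ and $\psi_{j,j}$---is a genuinely cleaner organisation than a raw case-by-case verification, and it explains \emph{why} the computation succeeds rather than merely recording that it does. The enumeration of the surviving triples in those two families is finite and your representative checks are accurate; I verified the full list and all cancellations go through as you describe.

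One small correction in the negative part: for $\psi_{1,1}$ at the triple $(x_1,x_1,y_1)$ the second and third summands do \emph{not} vanish individually---one gets $-\psi_{1,1}(2x_1,y_1)=-2y_1$ and $+\psi_{1,1}(y_1,x_1)=+2y_1$---so your blanket claim that ``the remaining two summands vanish because the relevant arguments fall outside the support'' fails at $j=1$. The total is still $y_1-2y_1+2y_1=y_1\neq 0$, so the counterexample stands, but you should adjust the justification for that case. For $j\ge 2$ your support argument is fine as stated.
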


\begin{proof} The proof of this proposition is carry out by applying similar arguments as in the
proof of Proposition \ref{pr35}.
\end{proof}

\subsection{The case $m=n+1$}

\

In this subsection we investigate the case $m=n+1$. Below we omit the
proofs of results of this subsection, because of they are obtained
similarly to above.

\begin{prop}\label{pr4.9} Any 2-cocycle $\varphi \in ZL_0^2(NF^{n,n+1}, NF^{n,n+1})$
has the following form:
\[\begin{cases} \varphi(y_j, y_1) =
\sum\limits_{k=1}^{n}\alpha_{j,k}x_k, & 1 \leq  j \leq   n+1,\\
 \varphi(x_j, y_1) = \sum\limits_{k=1}^{n+1}\beta_{j,k}y_k, & 1 \leq j \leq  n-1,\\
 \varphi(x_n, y_1) = - \frac{\alpha_{n+1,1}}{2}y_1 + \sum\limits_{k=2}^{n}\beta_{n,k}y_k, &\\
 \varphi(x_j, x_1) = -\alpha_{1,1}x_{i+1} +
 \sum\limits_{k=1}^{n}(\alpha_{j+1,k}+2\beta_{j,k})x_k, &  1 \leq  j \leq  n-1,\\
 \varphi(x_n, x_1) = \sum\limits_{k=2}^{n}(\alpha_{n+1,k}+2\beta_{n,k})x_k, & \\
\varphi(y_j, x_1) = 2\beta_{j,1}y_1 - \alpha_{1,1}y_{j+1}+
\sum\limits_{k=2}^{n+1}(\alpha_{j,k-1}+2\beta_{j,k})y_k, &  1 \leq  j \leq  n-1,\\
\varphi(y_n, x_1) = \alpha_{n+1,1}y_1 - \alpha_{1,1}y_{n+1}+
\sum\limits_{k=2}^{n+1}(\alpha_{n,k-1}+2\beta_{n,k})y_k, & \\
\varphi(y_{n+1}, x_1) = \sum\limits_{k=2}^{n+1}\alpha_{n+1,k-1}y_k, & \\
 \varphi(x_i, x_{j+1}) = - (\alpha_{j+1,1}+2\beta_{j,1})x_{i+1}, & 1 \leq i \leq  n-1, \ 1 \leq j \leq n-1,\\
\varphi(y_i, x_{j+1}) = - (\alpha_{j+1,1}+2\beta_{j,1})y_{i+1}, & 1 \leq i \leq  n, \ 1 \leq j \leq n-1,\\
\varphi(x_i, y_{j+1}) = - \beta_{j,1}y_{i+1}, & 1 \leq i \leq  n, \ 1 \leq j \leq n-1,\\
\varphi(x_i, y_{n+1}) = - \frac {\alpha_{n+1,1}} 2 y_{i+1}, & 1 \leq i \leq  n,\\
\varphi(y_i, y_{j+1}) = - 2\beta_{j,1}x_{i}, & 1 \leq i \leq  n, \
1 \leq j \leq n-1,\\
\varphi(y_i, y_{n+1}) = - \alpha_{n+1,1}x_{i}, & 1 \leq i \leq  n.\\
\end{cases}\]
\end{prop}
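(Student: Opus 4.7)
The plan is to replicate the strategy used for Proposition~\ref{4.4}, exploiting the Leibniz 2-cocycle identity $d^2\varphi=0$ on carefully chosen triples of basis elements, and then to carry out the extra bookkeeping that the presence of the additional odd basis vector $y_{n+1}$ forces. I set
\[
\varphi(y_j,y_1)=\sum_{k=1}^{n}\alpha_{j,k}x_k\quad(1\le j\le n+1),\qquad
\varphi(x_j,y_1)=\sum_{k=1}^{n+1}\beta_{j,k}y_k\quad(1\le j\le n),
\]
and then determine all remaining values of $\varphi$ on products of basis vectors by applying the cocycle relations in a fixed order.

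The first stage uses the ``homogeneous'' triples $d^2\varphi(x_j,y_1,y_1)=0$ and $d^2\varphi(y_j,y_1,y_1)=0$, which (since $[y_1,y_1]=x_1$, $[x_j,y_1]=\frac12y_{j+1}$, $[y_j,y_1]=x_j$) propagate $\varphi(\cdot,y_1)$ into expressions for $\varphi(x_j,x_1)$ and $\varphi(y_j,x_1)$; this is exactly as in Proposition~\ref{4.4}, but for $j=n$ the identity involves $[x_n,y_1]=\tfrac12 y_{n+1}$, and the corresponding coefficient of $y_1$ on the left is forced to equal $-\tfrac12\alpha_{n+1,1}$. This yields the values of $\beta_{n,1}$ and of $\varphi(y_n,x_1)$, $\varphi(y_{n+1},x_1)$ shown in the statement. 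In the second stage I apply $d^2\varphi(x_i,x_1,x_1)=0$ and $d^2\varphi(y_i,x_1,x_1)=0$ to obtain $\varphi(x_i,x_2)$ and $\varphi(y_i,x_2)$, then induct on $j$ via the pairs $d^2\varphi(x_i,x_1,x_j)=0$, $d^2\varphi(x_i,x_j,x_1)=0$ (and the analogous pair with $y_i$) to read off $\varphi(x_i,x_{j+1})$ and $\varphi(y_i,x_{j+1})$.

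The third stage is the one where the case $m=n+1$ really differs from $m=n$: I use the mixed triples $d^2\varphi(x_i,y_1,x_j)=0$, $d^2\varphi(x_i,x_j,y_1)=0$, $d^2\varphi(y_i,y_1,x_j)=0$, $d^2\varphi(y_i,x_j,y_1)=0$ to obtain $\varphi(x_i,y_{j+1})$ and $\varphi(y_i,y_{j+1})$ for $j\le n-1$ by the same ``sum the two cocycle equations'' trick as in Proposition~\ref{4.4}. For $j=n$ the new element $y_{n+1}$ appears on the right-hand side through $[x_n,y_1]=\tfrac12y_{n+1}$ and $[y_n,x_1]=y_{n+1}$; consequently the triples $d^2\varphi(x_i,x_n,y_1)=0$ and $d^2\varphi(y_i,y_n,x_1)=0$ (combined with the first-stage values of $\varphi(x_n,y_1)$ and $\varphi(y_n,x_1)$) give $\varphi(x_i,y_{n+1})=-\tfrac12\alpha_{n+1,1}y_{i+1}$ and $\varphi(y_i,y_{n+1})=-\alpha_{n+1,1}x_i$, which are the last two lines of the statement.

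Finally I have to check that no further independent constraint is imposed by the remaining cocycle triples; this is a routine but tedious consistency verification in which every identity becomes, after substitution of the expressions already derived, an automatic consequence of the null-filiform multiplication table. The main obstacle is bookkeeping: the coincidences forced by $[x_n,y_1]=\tfrac12 y_{n+1}$ must be tracked through several stages so that the two ``extra'' scalars attached to $\alpha_{n+1,1}$ in $\varphi(x_n,y_1)$ and $\varphi(y_n,x_1)$ come out with exactly the factors $-\tfrac12$ and $1$ claimed in the statement, and so that the independent parameters $\alpha_{n+1,k}$ ($k\ge 2$) appear symmetrically in $\varphi(x_n,x_1)$ and $\varphi(y_{n+1},x_1)$. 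Once this is done, the parameters $\{\alpha_{j,k}\}\cup\{\beta_{j,k}\}$ (with the displayed index ranges) are manifestly free, and the proposition follows.
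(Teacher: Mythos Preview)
Your proposal is correct and follows essentially the same approach as the paper: the authors omit the proof of this proposition, stating only that it is obtained ``similarly to above,'' i.e., by adapting the proof of Proposition~\ref{4.4} to the case $m=n+1$. Your outline does exactly this---using the same families of cocycle triples $d^2\varphi(\cdot,y_1,y_1)$, $d^2\varphi(\cdot,x_1,x_1)$, $d^2\varphi(\cdot,x_1,x_j)$/$d^2\varphi(\cdot,x_j,x_1)$, and $d^2\varphi(\cdot,y_1,x_j)$/$d^2\varphi(\cdot,x_j,y_1)$ in the same order, while correctly isolating the extra constraints and the new lines $\varphi(\cdot,y_{n+1})$ produced by the relations $[x_n,y_1]=\tfrac12 y_{n+1}$ and $[y_n,x_1]=y_{n+1}$.
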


Using the assertion of Proposition \ref{pr4.9} we indicate a
basis of the space $ZL_0^2(NF^{n,n+1}, NF^{n,n+1})$.
\begin{thm} The following cochains $\varphi_{j,k,}, \psi_{j,k}:$
$$\varphi_{1,1}:\left\{\begin{array}{ll}\varphi_{1,1}(y_1, y_1) = x_1, & \\
 \varphi_{1,1}(x_i, x_1) = -x_{i+1}, & 1 \leq i \leq n-1,\\
 \varphi_{1,1}(y_i, x_1) = -y_{i+1}, & 2 \leq i \leq n,\end{array}\right.
  \
 \varphi_{j,1}(2 \leq j \leq n): \left\{\begin{array}{ll} \varphi_{j,1}(y_j, y_1) = x_1,&\\
\varphi_{j,1}(x_{j-1}, x_1) = x_1, & \\
 \varphi_{j,1}(y_j, x_1) = y_2, & \\
\varphi_{j,1}(x_i, x_j) = -x_{i+1},& 1 \leq i \leq n-1,\\ \varphi_{1,1}(y_i, x_j) = -y_{i+1}, & 1 \leq i \leq n,\end{array}\right.$$
$$\varphi_{n+1,1}: \left\{\begin{array}{ll} \varphi_{n+1,1}(y_{n+1}, y_1) = x_1,&\\
\varphi_{n+1,1}(x_{n}, y_1) = - \frac 1 2 y_1, & \\
 \varphi_{n+1,1}(y_n, x_1) = -y_1, & \\
\varphi_{n+1,1}(y_{n+1}, x_1) = y_2,& \\
\varphi_{n+1,1}(x_i, y_{n+1}) = - \frac 1 2 y_{i+1},& 1 \leq i \leq n, \\
\varphi_{n+1,1}(y_i, y_{n+1}) = - x_{i}, & 1 \leq i \leq n,\\
\end{array}\right.$$
$$\varphi_{1,k}(2 \leq k \leq n):\left\{\begin{array}{l}\varphi_{1,k}(y_1, y_1) = x_k, \\
 \varphi_{1,k}(y_1, x_1) = y_{k+1},\end{array}\right.  \
 \varphi_{j,k}\left(\begin{array}{l}2 \leq j \leq n+1,  \\ 2 \leq k \leq n\end{array}\right): \left\{\begin{array}{l} \varphi_{j,k}(y_j, y_1) = x_k,\\
\varphi_{j,k}(x_{j-1}, x_1) = x_k, \\
 \varphi_{j,k}(y_j, x_1) = y_{k+1},\end{array}\right.$$
$$\psi_{j,1}(1 \leq j \leq n-1):\left\{\begin{array}{ll}\psi_{j,1}(x_j, y_1) = y_1, & \\
\psi_{j,1}(x_j, x_1) = 2x_{1}, & \\
\psi_{j,1}(y_j, x_1) = 2y_{1}, & \\
\psi_{j,1}(x_i, x_{j+1}) = -2x_{i+1}, & 1 \leq i \leq n-1,
\\
\psi_{j,1}(y_i, x_{j+1}) = -2y_{i+1}, & 1 \leq i \leq n,\\
\psi_{j,1}(x_i, y_{j+1}) = -y_{i+1}, & 1 \leq i \leq n,\\
\psi_{j,1}(y_i, y_{j+1}) = -2x_{i}, & 1 \leq i \leq n,
\end{array}\right.
$$
$$\psi_{j,k}\left(\begin{array}{l} 1 \leq j \leq n, \\ 2 \leq k \leq n\end{array}\right):\left\{\begin{array}{l}\psi_{j,k}(x_j, y_1) = y_k,\\
\psi_{j,k}(x_j, x_1) = 2x_{k},\\
\psi_{j,k}(y_j, x_1) = 2y_{k},\end{array}\right.
\psi_{j,n+1}(1 \leq j \leq n):\left\{\begin{array}{l}\psi_{j,n+1}(x_j, y_1) = y_{n+1},\\
\psi_{j,n+1}(y_j, x_1) = 2y_{n+1},\end{array}\right.
$$
form a basis of $ZL_0^2(NF^{n,n+1}, NF^{n,n+1})$.
\end{thm}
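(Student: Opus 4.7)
The plan is to exhibit the proposed cochains as the coordinate basis associated with the parametrization of $ZL_0^2(NF^{n,n+1}, NF^{n,n+1})$ given by Proposition~\ref{pr4.9}. Since that proposition already writes an arbitrary 2-cocycle $\varphi$ as an explicit linear combination of the free parameters $\alpha_{j,k}$ and $\beta_{j,k}$, the dimension of $ZL_0^2(NF^{n,n+1}, NF^{n,n+1})$ equals the number of these parameters, and the claim reduces to showing that each $\varphi_{j,k}$ (respectively $\psi_{j,k}$) in the statement is the 2-cocycle produced by setting $\alpha_{j,k}=1$ (respectively $\beta_{j,k}=1$) and all remaining parameters to $0$.

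To carry out this matching I would substitute the chosen single-parameter value into the formulas of Proposition~\ref{pr4.9} and read off the nonzero products. Turning on $\alpha_{j,k}$ yields $\varphi(y_j,y_1)=x_k$ together with the cascading contributions $\varphi(x_{j-1},x_1)=x_k$ and $\varphi(y_j,x_1)=y_{k+1}$, which match the definition of $\varphi_{j,k}$. For $k=1$ the extra terms $-\alpha_{1,1}x_{i+1}$, $-\alpha_{1,1}y_{i+1}$ in $\varphi(x_i,x_1)$, $\varphi(y_i,x_1)$, together with the $-(\alpha_{j+1,1}+2\beta_{j,1})x_{i+1}$ and $-(\alpha_{j+1,1}+2\beta_{j,1})y_{i+1}$ coefficients in $\varphi(x_i,x_{j+1})$ and $\varphi(y_i,x_{j+1})$, account for the additional rows appearing in $\varphi_{1,1}$ and $\varphi_{j,1}$. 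Similarly, turning on $\beta_{j,k}$ produces the coefficient $2\beta_{j,k}$ that permeates Proposition~\ref{pr4.9}, which explains the factor of $2$ in $\psi_{j,k}(x_j,x_1)=2x_k$ and $\psi_{j,k}(y_j,x_1)=2y_k$; the boundary contributions $\psi_{j,1}(x_i,y_{j+1})=-y_{i+1}$ and $\psi_{j,1}(y_i,y_{j+1})=-2x_i$ arise from the $-\beta_{j,1}$ and $-2\beta_{j,1}$ coefficients in the last block of Proposition~\ref{pr4.9}.

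Linear independence and spanning then follow simultaneously: each proposed cochain carries a distinguished nonzero value not shared by any other (for instance $\varphi_{j,k}(y_j,y_1)=x_k$ singles out the parameter $\alpha_{j,k}$, and $\psi_{j,k}(x_j,y_1)=y_k$ singles out $\beta_{j,k}$), so the family is linearly independent; and by construction every $\varphi \in ZL_0^2(NF^{n,n+1}, NF^{n,n+1})$ decomposes as
\[
\varphi = \sum_{j,k}\alpha_{j,k}\varphi_{j,k} + \sum_{j,k}\beta_{j,k}\psi_{j,k}
\]
with coefficients supplied directly by Proposition~\ref{pr4.9}.

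The main obstacle is the careful bookkeeping for the cochains that have no analogue in the $m=n$ case, namely $\varphi_{n+1,1}$ and $\psi_{j,n+1}$. The parameter $\alpha_{n+1,1}$ propagates into the coefficients $-\frac{\alpha_{n+1,1}}{2}y_1$ of $\varphi(x_n,y_1)$, $\alpha_{n+1,1}y_1$ of $\varphi(y_n,x_1)$, into every coefficient of $\varphi(y_{n+1},x_1)$, and into $\varphi(x_i,y_{n+1})$, $\varphi(y_i,y_{n+1})$; all of these contributions must be bundled into the single basis element $\varphi_{n+1,1}$ rather than split across several. Correspondingly, each $\psi_{j,n+1}$ for $1\le j\le n$ collects the independent direction coming from $\beta_{j,n+1}$, which affects only $\varphi(x_j,y_1)$ and $\varphi(y_j,x_1)$. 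Once these identifications are set up the verification is formally identical to the proof of the analogous theorem in the $m=n$ subsection.
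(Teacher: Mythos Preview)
Your approach is correct and is precisely what the paper intends: the authors explicitly omit the proof in this subsection, stating that it is obtained similarly to the $m=n$ case, where the basis is read off directly ``using the assertion of Proposition~\ref{4.4}.'' Your plan of specializing the parametrization in Proposition~\ref{pr4.9} by switching on a single $\alpha_{j,k}$ or $\beta_{j,k}$ to produce each basis cochain, and then invoking the distinguished entries $\varphi_{j,k}(y_j,y_1)=x_k$ and $\psi_{j,k}(x_j,y_1)=y_k$ for linear independence, is exactly that argument made explicit.
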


\begin{prop} The cochains $\xi_{j,k}$ and
$\zeta_{j,k}$ defined as:
$$\left\{\begin{array}{lll}\xi_{j,k} =  \varphi_{j,k}, & 1 \leq j \leq n, & j \leq k \leq n,\\
\xi_{j,k} =  \varphi_{j,k} - \frac 1 2 \psi_{j,k+1},& 2 \leq j \leq n, & 1 \leq k \leq j-1,\\
\zeta_{j,1} =  \frac 1 2 \psi_{j-1,1} - \varphi_{j,1},& 2 \leq j \leq n,\\
\zeta_{n+1,1} =   - \varphi_{n+1,1},&\\
\zeta_{j,k} =  \psi_{j-1,k},& 2 \leq j \leq n+1, & j \leq k \leq n+1,\\
\zeta_{j,k} =  \frac 1 2 \psi_{j-1,k} - \varphi_{j,k},& 2 \leq j \leq n+1, & 2 \leq k \leq j-1,\\
\end{array}\right.$$
form a basis of $BL_0^2(NF^{n,n+1}, NF^{n,n+1})$.
\end{prop}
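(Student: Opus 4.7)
The plan is to mirror the argument of Proposition~\ref{pr4} and its counterpart for the $m=n$ case. One fixes a complement $E$ of $\Der(NF^{n,n+1})_0$ inside $C^1(NF^{n,n+1},NF^{n,n+1})_0$, computes $d^1 f$ on a natural basis of $E$, expresses each output in terms of the cocycle basis $\{\varphi_{j,k},\psi_{j,k}\}$ supplied by the preceding theorem, and inverts the resulting (triangular) system to recover the listed $\xi_{j,k}$ and $\zeta_{j,k}$.

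First, a dimension count. By Proposition~\ref{pr42} the space $\Der(NF^{n,n+1})_0$ has dimension $n+1$ (each such derivation being determined by $d(y_1)\in L_1$), while $\dim C^1(NF^{n,n+1},NF^{n,n+1})_0 = n^2+(n+1)^2 = 2n^2+2n+1$; hence $\dim BL^2_0 = 2n^2+n$. Summing the sizes of the six index ranges in the statement yields
\[
\tfrac{n(n+1)}{2}+\tfrac{n(n-1)}{2}+(n-1)+1+\tfrac{n(n+1)}{2}+\tfrac{n(n-1)}{2}=2n^2+n,
\]
so the proposed family has the correct cardinality. Its elements are visibly linearly independent inside $ZL_0^2$, since a nontrivial relation among them would, by the explicit defining formulas, contradict the linear independence of $\{\varphi_{j,k},\psi_{j,k}\}$ established in the preceding theorem. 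Thus it suffices to show that each $\xi_{j,k}$ and $\zeta_{j,k}$ actually lies in $BL^2_0$.

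To realize these as coboundaries I would take $E$ to be spanned by the endomorphisms $f^x_{j,k}\colon x_j\mapsto x_k$ for $2\le j\le n$, $1\le k\le n$, together with $f^y_{j,k}\colon y_j\mapsto y_k$ for $2\le j\le n+1$, $1\le k\le n+1$ (all other basis vectors sent to $0$); together with the $(n+1)$-parameter family of derivations these span $C^1_0$. Substituting each such $f$ into
\[
(d^1 f)(u,v) = -f([u,v])+[f(u),v]+[u,f(v)]
\]
and invoking Theorem~\ref{t1}, the images $d^1 f^x_{j,k}$ and $d^1 f^y_{j,k}$ expand as explicit linear combinations of the $\varphi$'s and $\psi$'s. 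For generic interior indices this produces telescoping relations in the spirit of Proposition~\ref{pr4}, whose inversion via sums of the form $d^1 f^x_{k,s}+d^1 f^x_{k+1,s+1}+\cdots$ isolates individual $\varphi_{j-1,k}$'s; combining these with the corresponding $d^1 f^y$-identities then yields the half-$\psi$ and $\pm 1$ coefficients appearing in the definitions of $\xi_{j,k}$ and $\zeta_{j,k}$.

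The principal obstacle is the boundary-case bookkeeping: the definitions split into four regimes (the upper-triangular ranges $j\le k$, the lower-triangular ranges $1\le k\le j-1$, the first-column corrections $\zeta_{j,1}$ for $2\le j\le n$, and the isolated entry $\zeta_{n+1,1}=-\varphi_{n+1,1}$). The absence of a $\tfrac{1}{2}\psi_{n,1}$ correction in $\zeta_{n+1,1}$ reflects that $NF^{n,n+1}$ contains no $x_{n+1}$: the endomorphism $f^y_{n+1,1}$ produces $\varphi_{n+1,1}$ under $d^1$ with no companion $\psi_{n,1}$-term, because every product of $NF^{n,n+1}$ that would contribute to $\psi_{n,1}$ involves the non-existent $x_{n+1}$. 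Once these case distinctions are checked against the defining formulas for $\varphi_{j,k}$ and $\psi_{j,k}$, the listed $\xi_{j,k},\zeta_{j,k}$ emerge as coboundaries of explicit elements of $E$, and the proposition then follows from the dimension count above.
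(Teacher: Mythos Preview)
Your approach is correct and is exactly what the paper intends: the paper gives no proof for this proposition, explicitly stating that the proofs in the $m=n+1$ subsection ``are obtained similarly to above,'' i.e., by repeating the argument of Proposition~\ref{pr4} and its $m=n$ analogue. Your dimension count, choice of complement $E$ spanned by the $f^x_{j,k}$ and $f^y_{j,k}$, telescoping inversion, and handling of the boundary entry $\zeta_{n+1,1}=-\varphi_{n+1,1}$ are precisely the ingredients that computation requires, so there is nothing to add.
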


\begin{cor} $\{\overline{\varphi_{n+1,2}},\overline{\varphi_{n+1,3}} , \dots,
\overline{\varphi_{n+1,n}}\}$ form a basis of $HL_0^2(NF^{n,n+1},
NF^{n,n+1})$.
\end{cor}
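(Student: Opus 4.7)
I would combine a dimension count with a linear-independence argument parallel to the one just used for $NF^{n,n}$. From the explicit bases of $ZL_0^2(NF^{n,n+1}, NF^{n,n+1})$ and $BL_0^2(NF^{n,n+1}, NF^{n,n+1})$ given in the preceding theorem and proposition, a direct count yields
\[
\dim ZL_0^2(NF^{n,n+1}, NF^{n,n+1}) = 2n^2 + 2n - 1, \qquad \dim BL_0^2(NF^{n,n+1}, NF^{n,n+1}) = 2n^2 + n,
\]
hence $\dim HL_0^2(NF^{n,n+1}, NF^{n,n+1}) = n - 1$, which is exactly the number of proposed basis classes. Consequently it suffices to establish their linear independence.

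Assume $\sigma = \sum_{k=2}^{n} c_k \varphi_{n+1,k} \in BL_0^2$, and expand $\sigma$ in the $\{\xi_{j,k}, \zeta_{j,k}\}$-basis of $BL_0^2$. The key local observation is that, among the listed basis coboundaries, $\varphi_{n+1,k}$ (for $2 \leq k \leq n$) appears only in $\zeta_{n+1,k} = \tfrac{1}{2}\psi_{n,k} - \varphi_{n+1,k}$; matching the coefficient of $\varphi_{n+1,k}$ forces $\zeta_{n+1,k}$ to appear with coefficient $-c_k$, which introduces an unwanted term $-\tfrac{c_k}{2}\psi_{n,k}$. The only other basis coboundary containing $\psi_{n,k}$ is $\xi_{n,k-1} = \varphi_{n,k-1} - \tfrac{1}{2}\psi_{n,k}$, and using it to cancel brings in a $\varphi_{n,k-1}$ term; the only other coboundary containing $\varphi_{n,k-1}$ is $\zeta_{n,k-1}$; and so on.

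Iterating produces a cancellation chain
\[
\varphi_{n+1,k} \to \psi_{n,k} \to \varphi_{n,k-1} \to \psi_{n-1,k-1} \to \varphi_{n-1,k-2} \to \cdots \to \varphi_{n-k+2,1} \to \psi_{n-k+1,1},
\]
terminating at $\psi_{n-k+1,1}$. The only basis coboundary containing $\psi_{n-k+1,1}$ is $\zeta_{n-k+2,1}$, which has already been used in the chain with a fixed coefficient; hence the total coefficient of $\psi_{n-k+1,1}$ in the expansion equals $-\tfrac{c_k}{2}$, which must match its coefficient in $\sigma$, namely $0$. Thus $c_k = 0$. Since the chains indexed by different values of $k$ traverse distinct diagonals of indices and therefore share no basis coboundary, the conclusion $c_k = 0$ holds independently for every $k \in \{2,\dots,n\}$, establishing linear independence.

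The main obstacle will be the careful bookkeeping in the chain argument: verifying at each step that the indicated $\xi_{j',k'-1}$ or $\zeta_{j',k'}$ is indeed the unique basis coboundary (besides those already fixed earlier in the same chain) providing the residual cochain, and confirming the disjointness of the chains across different values of $k$ so that each equation $c_k = 0$ is forced in isolation.
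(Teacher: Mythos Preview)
Your approach is correct and, in fact, considerably more detailed than what the paper offers. In the paper this corollary is stated with no proof at all: it is presented as an immediate consequence of the preceding explicit bases of $ZL_0^2$ and $BL_0^2$, and the dimension formulas $\dim ZL_0^2 = 2n^2+2n-1$, $\dim BL_0^2 = 2n^2+n$, $\dim HL_0^2 = n-1$ are simply recorded afterward. Your dimension count and chain argument make this explicit; the chain you describe is accurate (each $\psi_{j,1}$ occurs only in $\zeta_{j+1,1}$, which forces the terminal contradiction), and the chains for different $k$ do lie on disjoint index diagonals, so the independence claims decouple as you assert.

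One small comment on presentation: since you already have the dimension count giving $\dim HL_0^2 = n-1$, the chain argument is only needed to show that the $n-1$ classes $\overline{\varphi_{n+1,k}}$ are linearly independent, not that they span; you state this correctly. An alternative and slightly quicker route, once the explicit $BL_0^2$ basis is in hand, is to observe that by successively adding appropriate $\xi$'s and $\zeta$'s one can reduce any element of $ZL_0^2$ modulo $BL_0^2$ to a combination of the $\varphi_{n+1,k}$ ($2\le k\le n$), which together with the dimension count gives both spanning and independence at once. Either way, your plan is sound.
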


 Therefore
 $$\begin{array}{l}\dim ZL_0^2(NF^{n,n+1},
NF^{n,n+1}) = 2n^2 + 2n-1, \\ \dim BL_0^2(NF^{n,n+1}, NF^{n,n+1}) =
2n^2 + n, \\ \dim HL^2_0(NF^{n,n+1}, NF^{n,n+1}) = n - 1.
\end{array}$$

Below, we indicate a basis infinitesimal deformations satisfying
the condition \eqref{E:2.1}.

\begin{prop} The infinitesimal deformations $\varphi_{j,k} \ (1 \leq j \leq n+1, 2 \leq k \leq n)$
and $\psi_{j,k} \ (1 \leq j \leq n, 2 \leq k \leq n+1)$ satisfy
the condition \eqref{E:2.1}, but the 2-cocycles $\varphi_{j,1} \ (1 \leq j
\leq n+1)$ and $\psi_{j,1} \ (1 \leq j \leq n-1)$ do not satisfy
the condition \eqref{E:2.1}.
\end{prop}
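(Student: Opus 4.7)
The strategy follows the two-step template of Proposition \ref{pr35}: check \eqref{E:2.1} separately for each listed cocycle by direct substitution.

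For the positive assertion, fix any $\theta \in \{\varphi_{j,k}, \psi_{j,k}\}$ with $k \geq 2$ and exploit the following structural observation. Reading off the defining formulas, the image of $\theta$ is contained in $I := \mathrm{span}\{x_\ell, y_\ell : \ell \geq 2\}$, which avoids both generators $x_1$ and $y_1$. On the other hand, every nonzero value $\theta(a,b)$ forces $b \in \{x_1, y_1\}$ (the other potentially permissible second arguments $x_{j+1}, y_{j+1}$ appear only in the $\psi_{j,1}$-family, which is excluded by $k \geq 2$). Consequently $\theta(b,c) \in I$ is never an admissible second argument of $\theta$, forcing $\theta(a, \theta(b,c)) \equiv 0$ on all basis triples. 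A parallel inspection disposes of $\theta(\theta(a,b), c)$ and $\theta(\theta(a,c), b)$ except in the handful of triples where the image coincides with an admissible first argument (notably $x_k = x_{j-1}$ when $k = j-1$); in those residual triples I expect the two nested terms to cancel in the alternating sum of \eqref{E:2.1}, once the super sign $(-1)^{|b||c|}$ is correctly tracked.

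For the negative assertion, the plan is to exhibit, for each $\theta \in \{\varphi_{j,1}, \psi_{j,1}\}$, an explicit basis triple on which \eqref{E:2.1} fails. The crucial feature is that these cocycles do send certain inputs into the generator subspace: $\varphi_{j,1}(y_j, y_1) = x_1$, $\varphi_{j,1}(x_{j-1}, x_1) = x_1$, and $\psi_{j,1}(x_j, y_1) = y_1$, so a second application of $\theta$ fires nontrivially and produces an uncancelled term in \eqref{E:2.1}. For instance, for $\varphi_{1,1}$ the triple $(y_1, y_1, y_1)$ yields left-hand side $-y_2$ (using $\varphi_{1,1}(y_1,y_1)=x_1$, $\varphi_{1,1}(y_1,x_1)=-y_2$, and $(-1)^{|y_1||y_1|}=-1$); the remaining cocycles $\varphi_{j,1}$ with $j \geq 2$, $\varphi_{n+1,1}$, and $\psi_{j,1}$ will be handled by analogous explicit triples built from the trigger elements listed above.

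The main obstacle is bookkeeping rather than conceptual: the $m = n+1$ case features substantially more cocycles than the algebra case of Proposition \ref{pr35}, and the super sign must be tracked through every evaluation. No genuinely new difficulty beyond the mechanical case check of the earlier proof is anticipated.
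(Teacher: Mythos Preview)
Your overall plan matches the paper's, which simply appeals to the argument of Proposition~\ref{pr35}; your structural observation that for $k\ge 2$ the image of $\theta$ avoids the generators while the second input must lie in $\{x_1,y_1\}$ is a clean way to kill the first summand of \eqref{E:2.1} and reduce the remaining verification to a finite check.

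There is, however, a concrete error in your witness triple for $\varphi_{1,1}$. You claim $\varphi_{1,1}(y_1,x_1)=-y_2$, but the defining formula reads $\varphi_{1,1}(y_i,x_1)=-y_{i+1}$ only for $2\le i\le n$, so $\varphi_{1,1}(y_1,x_1)=0$. On the triple $(y_1,y_1,y_1)$ all three terms of \eqref{E:2.1} therefore vanish and you have not exhibited a failure. A working replacement is $(x_1,y_1,y_1)$: here $\varphi_{1,1}(x_1,\varphi_{1,1}(y_1,y_1))=\varphi_{1,1}(x_1,x_1)=-x_2$, while $\varphi_{1,1}(x_1,y_1)=0$ makes the other two terms vanish, so the left-hand side of \eqref{E:2.1} equals $-x_2\neq 0$. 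The same care with index ranges is needed when you write down the remaining witness triples for $\varphi_{j,1}$ and $\psi_{j,1}$; the ``trigger'' idea is right, but each triple must be checked against the actual domain restrictions rather than asserted by analogy.
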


Since $\sum\limits_{k=2}^nb_k\psi_{n,k}$ and
$\sum\limits_{k=2}^nc_k\varphi_{n+1,k}$ define  linear integrable
deformations of $NF^{n,n}$ and $NF^{n,n+1}$, respectively, we
consider two families of superalgebras $\nu_t(b_2, b_3, \dots,
b_n) = NF^{n,n} + t\sum\limits_{k=2}^nb_k\psi_{n,k}$ and
$\eta_t(c_2, c_3, \dots, c_n) = NF^{n,n+1} +
t\sum\limits_{k=2}^nc_k\varphi_{n+1,k}$ with the
multiplication tables
\[\begin{cases}[y_i, y_1] = x_{i},& 1 \leq i \leq n,\\
[x_i, y_1] = \frac 1 2 y_{i+1},& 1 \leq i \leq n-1,\\
[x_n, y_1] = t\sum\limits_{k=2}^nb_ky_k,&\\
[y_i, x_1] = y_{i+1},& 1 \leq i \leq n-1,\\
[y_n, x_1] = 2t\sum\limits_{k=2}^nb_ky_k,& \\
[x_i, x_1] = x_{i+1},& 1 \leq i \leq n-1,\\
[x_n, x_1] = 2t\sum\limits_{k=2}^nb_kx_{k},\\
\end{cases} \ \mbox{and} \  \begin{cases}[x_i, x_1] = x_{i+1},& 1 \leq i \leq n-1,\\
[x_{n}, x_1] = t\sum\limits_{k=j}^nc_kx_k,&\\
[y_i, x_1] = y_{i+1},& 1 \leq j \leq n,\\
[y_{n+1}, x_1] = t\sum\limits_{k=j}^nc_ky_{k+1},&\\
[y_i, y_1] = x_{i},& 1 \leq i \leq n,\\
[y_{n+1}, y_1] = t\sum\limits_{k=j}^nc_kx_k,&\\
[x_i, y_1] = \frac 1 2 y_{i+1},& 1 \leq i \leq n.
\end{cases}\] respectively.

Putting $b_k' = tb_k$ and $c_k'=tc_k$, we can assume in both
multiplications $t= 1$.

The description of single-generated Leibniz superalgebras deduce
that they are have the forms of superalgebras $\nu_1(b_2, b_3,
\dots, b_n)$ and $\eta_1(c_2, c_3, \dots, c_n)$.

Similarly to the case of Leibniz algebras for superalgebras we
obtain the following theorem.
\begin{thm} $\overline{\bigcup\limits_{b_2, \dots, b_n}\Orb(\nu_1(b_2, b_3,
\dots, b_n))}$ and  $\overline{\bigcup\limits_{c_2, \dots,
c_n}\Orb(\eta_1(c_2, c_3, \dots, c_n))}$ are irreducible
components of the varieties $Leib^{n,n}$ and $Leib^{n,n+1}$,
respectively.
\end{thm}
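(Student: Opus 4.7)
The plan is to follow the same blueprint used in the theorem at the end of Section~3 for the algebra case, dualising the relevant steps for each of the two superalgebra families in parallel. Throughout I will write $U^{n,n}$ (resp.\ $U^{n,n+1}$) for the set of isomorphism classes of single-generated Leibniz superalgebras in $Leib^{n,n}$ (resp.\ $Leib^{n,n+1}$), so that
$$U^{n,n}=\bigcup_{b_2,\dots,b_n}\Orb(\nu_1(b_2,\dots,b_n)),\qquad U^{n,n+1}=\bigcup_{c_2,\dots,c_n}\Orb(\eta_1(c_2,\dots,c_n)),$$
the second equality in each case being the identification of single-generated superalgebras with the $\nu_1$- and $\eta_1$-families that was established in the paragraph preceding the theorem.

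First I would verify that $U^{n,n}$ and $U^{n,n+1}$ are open subsets of $Leib^{n,n}$ and $Leib^{n,n+1}$ respectively. This is the exact analogue of the openness argument given in Section~3 for single-generated Leibniz algebras: a $q$-generated ($q>1$) Leibniz superalgebra satisfies linear dependencies among the vectors $e_i,e_i^2,\dots,e_i^{n+m}$ for every basis element $e_i$, which translate into the simultaneous vanishing of certain minor-determinant polynomials in the structure constants. Care is needed to treat even and odd generators separately, because the right-multiplication powers alternate between the even and odd components, but the conclusion that multi-generated superalgebras form a closed subvariety goes through verbatim.

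Second, the families $\nu_1(b_2,\dots,b_n)$ and $\eta_1(c_2,\dots,c_n)$ are each parameterised by the irreducible affine space $\mathbb{A}^{n-1}$. The action morphisms
$$GL(V)\times\mathbb{A}^{n-1}\longrightarrow Leib^{n,n},\qquad (g,b)\mapsto g\cdot\nu_1(b),$$
and the analogous map for $\eta_1$, are regular; since $GL(V)\times\mathbb{A}^{n-1}$ is irreducible, its image $U^{n,n}$ (respectively $U^{n,n+1}$) is irreducible, and so is its closure in $Leib^{n,n}$ (respectively $Leib^{n,n+1}$). At this point I would also invoke Propositions~\ref{pr42} and \ref{pr43}, which together yield $\dim\Der(NF^{n,m})=2n$, and, as in the algebra case, the Burde-type argument of \cite{Burde1} to rule out $\nu_1(b)\in\overline{\Orb(\nu_1(b'))}$ for generic distinct parameter tuples; this ensures that the parameter family actually produces a subvariety of the expected dimension rather than collapsing into a single orbit closure.

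Finally, combining the two previous steps: in the Noetherian variety $Leib^{n,n}$ (respectively $Leib^{n,n+1}$), an irreducible closed subset that contains a non-empty open subset of the ambient variety must be an irreducible component. Since $\overline{U^{n,n}}$ is irreducible and contains the non-empty open set $U^{n,n}$, it is an irreducible component, and likewise for $\overline{U^{n,n+1}}$. The main obstacle I anticipate is the openness step in step one, because the $\mathbb{Z}_2$-grading forces one to track which component each power of a generator lives in before writing down the minor conditions; everything else is a routine transcription of the algebra argument to the super setting.
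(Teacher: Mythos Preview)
Your proposal is correct and follows exactly the approach the paper intends: the paper gives no separate proof for this theorem, saying only ``Similarly to the case of Leibniz algebras for superalgebras we obtain the following theorem,'' and your three steps (openness of the single-generated locus, irreducibility of the orbit-union via the action morphism on $GL(V)\times\mathbb{A}^{n-1}$, and the Burde-type non-degeneration check) are precisely the Section~3 template transported to the super setting. One minor slip: Propositions~\ref{pr42} and~\ref{pr43} give $\dim\Der(NF^{n,n})=2n$ but $\dim\Der(NF^{n,n+1})=2n+1$, not $2n$ in both cases; this does not affect the argument.
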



\section*{Acknowledgements}
The authors are grateful to Prof. A.S. Dzhumadildaev for providing
the derived map $d$ for right Leibniz superalgebras. Authors are also
thankful to Prof. V.V. Gorbatsevich for his useful comments. The
second author was partially supported by Grant (RGA) No:11-018
RG/Math/AS\_I--UNESCO FR: 3240262715.

\end{document}